\newtheorem{thm}{Theorem}[section]
\newtheorem{cor}[thm]{Corollary}
\newtheorem{lem}[thm]{Lemma}
\newtheorem{prop}[thm]{Proposition}
\newtheorem{prop-def}[thm]{Proposition-Definition}
\theoremstyle{definition}
\newtheorem{Def}[thm]{Definition}
\theoremstyle{remark}
\newtheorem{rmk}[thm]{\bf Remark}
\newtheorem{ex}[thm]{\bf Example}
\numberwithin{equation}{section}
\def\la{\lambda}
\def\op{{\rm op}}
\def\P{\mathcal{P}}
\def\Q{\mathcal{Q}}
\def\U{\mathcal{U}}
\def\K{\Bbbk}
\def\ot{\otimes}
\def\al{\alpha}
\def\Ext{{\rm Ext}}
\def\xr{\xrightarrow}
\def\HP{{\rm HP}}
\def\H{{\rm H}}
\def\HL{{\rm HL}}
\def\PDer{{\rm PDer}}
\def\HH{{\rm HH}}
\def\Hom{{\rm Hom}}
\def\Tor{\mathrm{Tor}}
\def\vp{\varphi}
\def\id{{\rm id}}
\def\Ker{{\rm Ker}}
\def\Der{{\rm Der}}
\def \1{\mathbbm 1}
\def \I {{\rm I}}
\def\II {{\rm II}}
\def\uoQ {{\ot}_{\Q}}
\def\ft {\ltimes_f}
\def\gt{\ltimes_g}
\def\IPDer{{\rm IPDer}}
\def\sl {{\mathfrak{sl}}}
\begin{document}
\title[Cohomology structures for a Poisson algebra: II]
{Cohomology structures for a Poisson algebra: II}

\author[Y.-H. Bao and Y.Ye]
{Yan-Hong Bao\quad and Yu Ye$^*$}

\thanks{$^*$ The corresponding author}
\subjclass[2010]{16E40, 16W10, 17B37}
\date{\today}
\thanks{E-mail:
baoyh$\symbol{64}$ahu.edu.cn (Y.-H. Bao), yeyu@ustc.edu.cn (Y. Ye)}
\keywords{Poisson algebra, Poisson cohomology, formal deformation, deformation quantization}

\maketitle

\dedicatory{}%
\commby{}%

\begin{abstract}
For a Poisson algebra,
we prove that the Poisson cohomology theory introduced in [M. Flato, M. Gerstenhaber and A. A. Voronov, Cohomology and
Deformation of Leibniz Pairs, Lett. Math. Phys. 34 (1995) 77--90] is given by a certain derived functor. We show that the (generalized) deformation quantization is equivalent to the formal deformation for Poisson algebras under certain mild conditions. Finally we construct a long exact sequence, and use it to calculate the Poisson cohomology groups via the Yoneda-extension groups of certain quasi-Poisson modules and the Lie algebra cohomology groups.
\end{abstract}

\section{Introduction}

M. Flato, M. Gerstenhaber and A. A. Voronov developed a cohomology theory and a formal deformation theory for Poisson algebras
(not necessarily commutative), see \cite{FGV}. They showed that this cohomology controls those formal deformations such that the associative multiplication and the Lie bracket are simultaneously deformed. We call this cohomology FGV-Poisson cohomology, or simply Poisson cohomology.

An immediate question is whether the FGV-Poisson cohomology is exactly given by usual Yoneda-extensions or derived functors.
We introduce a complex $\chi_\bullet(A)$ of free Poisson modules for a Poisson algebra $A$, and show that the FGV-Poisson cohomology is exactly defined by the derived functor associated to the this complex, which gives an affirmative answer to the above question.

\begin{thm} [Theorem \ref{PC by der func}]
 Let $A$ be a Poisson algebra and $M$ a Poisson module over $A$. Then
\[\HP^n(A, M)\cong \H^n \Hom_\P(\chi_\bullet(A), M).\]
\end{thm}

An interesting observation is that Kontsevich's deformation quantization can be interpreted as a certain formal deformation of Poisson algebras. As a consequence, we may give a necessary condition for the existence of deformation quantization by using the FGV-Poisson cohomology group. To be precise, a Poisson algebra has no quantization deformations whenever the second FGV-Poisson cohomology group vanishes.

\begin{prop}[Proposition \ref{condition for dq and fd}]
Let $P=(A, \cdot, \{-,-\})$ be a nontrivial commutative Poisson algebra. If $\HP^2(A)=0$, then $P$ has no
deformation quantization.
\end{prop}

A well-known result by Farkas and Letzter says that a prime noncommutative algebra admits only standard Poisson structures(\cite[Theorem 1.2]{FL}). In some sense, this result will lead to a problem of lack of (non-standard) noncommutative Poisson algebras, and for this reason more general Poisson structures have been introduced and studied \cite{Xu, RVO}. While in the study of deformation quantization, it will have an unexpected application. It guarantees the existence of (generalized) deformation quantization under certain condition.

\begin{thm}[Theorem \ref{dq and fd}] Let $P=(A, \cdot, \{-,-\})$ be a commutative Poisson algebra.

$(1)$ If $P$ admits an $n$-deformation quantization for some $n\ge 1$, then $P$ can be deformed to a standard Poisson algebra.

$(2)$ Assume further that each formal deformation $(A[[t]], m_t)$
of $(A, \cdot)$ has only standard Poisson structures.
Then $P$ has an $n$-deformation quantization for some positive integer $n$ if and only if $P$ has a formal deformation.
\end{thm}

In particular, as it is shown in Example \ref{ex-intdom}, if a Poisson algebra is an integral domain as an associative algebra, then it has a (generalized) deformation quantization if and only if it has nontrivial deformation.

Recall that for a commutative Poisson algebra, Lichnerowicz has introduced in \cite{Li} a cohomology theory, which we call Lichnerowicz-Poisson cohomology (LP-cohomology for short). We would like to mention that for commutative Poisson algebras, the FGV-Poisson cohomology is different from the LP-cohomology. Roughly speaking, the FGV-Poisson cohomology controls the formal deformations which deform the product and the Lie bracket simultaneously, while the LP-cohomology controls those ones which only deform the Lie bracket. In Section 5, we compare the FGV-Poisson cohomology groups and the LP-chomology groups in lower degrees for commutative Poisson algebras.

The FGV-Poisson cohomology group is quite nontrivial to calculate in general. However, we may construct a long exact sequence, involving the FGV-Poisson cohomology groups, the Yoneda-extension groups of certain quasi-Poisson modules and the Lie algebra cohomology groups, see Theorem \ref{l.e.s. of Poisson modules}. This enables us to calculate the FGV-cohomology via the Lie algebra cohomology and the quasi-Poisson cohomology, where the former has been well studied by many authors, and the latter was discussed in our previous paper \cite{BY}.

The paper is organized as follows. In section 2, we recall some
basics. In Section 3 we construct a bicomplex
of free Poisson modules for a Poisson algebra, whose total complex applies to compute the FGV-Poisson cohomology groups.
Section 4 explains how Kontsevich's deformation quantization is related to formal deformation of Poisson algebras, and some conditions of the existence of (generalized) deformation quantization are also provided there in the language of the FGV-Poisson cohomology. In Section 5, we compare the FGV-Poisson cohomology  with the LP-cohomology for commutative Poisson algebras.
In Section 6, we show a long exact sequence, and apply to calculate the FGV-Poisson cohomology groups by using Lie algebra cohomology and quasi-Poisson cohomology. In the last section, we study the standard Poisson algebra of the $2\times 2$ matrix algebra. We compute its Poisson cohomology groups of lower degrees. Moreover, we show that in this case, any Poisson 2-cocycle lifts to a formal deformation.

%

Throughout, $\K$ will be a field of characteristic zero, all associative algebras over $\K$ have a
multiplicative identity element, and all unadorned $\Hom$ and $\otimes$ will be $\Hom_\K$ and $\otimes_\K$, respectively.
For simplicity, we denote by $A^i$ and $\wedge^j$ the tensor product and
the $j$-th exterior power of the $\K$-space $A$, respectively.

\section{Preliminaries}

A triple $(A,\cdot,\{-,-\})$ is called a \emph{Poisson algebra} over $\K$,
if $(A,\cdot)$ is an associative $\K$-algebra (not necessarily commutative),
$(A,\{-,-\})$ is a Lie algebra over $\K$, and the Leibniz rule
$\{ab,c\}=a\{b,c\}+\{a,c\}b$ holds for all $a,b,c\in A$.
A \emph{quasi-Poisson $A$-module} $M$ is both an $A$-$A$-bimodule and a Lie module
over $(A,\{-,-\})$ with the action given by $\{-,-\}_\ast: A\times M\to M$, which satisfies
\begin{align*}
\{a,bm\}_\ast &= \{a,b\}m+b\{a,m\}_\ast,\\
\{a,mb\}_\ast &= m\{a,b\}+\{a,m\}_\ast b
\end{align*}
for all $a,b\in A$ and $m\in M$. In addition, if
\[\{ab,m\}_\ast=a\{b,m\}_\ast+\{a,m\}_\ast b\]
holds for all $a,b\in A$ and $m\in M$, then $M$ is called a \emph{Poisson $A$-module}. Let $M, N$ be (quasi-)Poisson modules.
A homomorphism of (quasi-)Poisson $A$-modules
is a $\K$-linear function $f: M\to N$ which is
a homomorphism of both $A$-$A$-bimodules and Lie modules.

The following convention is handy in calculation, and we refer to \cite{YYY} for more details.

Denote by $A^\op$ the opposite algebra of the associative algebra $A$.
To avoid confusion, we usually use $a$ to denote an element in $A$
and $a'$ its corresponding element in $A^\op$.
Denote by  $A^e$ the \emph{enveloping algebra} $A\ot A^\op$ of the associative algebra $(A, \cdot)$,
and by $\U(A)$ the \emph{universal enveloping algebra} of the Lie algebra $(A, \{-,-\})$.
It is well known that the category of $A$-$A$-bimodules is isomorphic to the $A^e$-module category
and the category of Lie modules over $(A, \{-, -\})$ is isomorphic to the $\U(A)$-module category.
Note that $\U(A)$ is a cocommutative Hopf algebra with the comultiplication $\Delta(\al)=\sum \al_1 \otimes \al_2$ for any $\al\in \U(A)$
and the counit map $\epsilon\colon \U(A)\to \K$, and $A^e$ is a $\U(A)$-module algebra
with the action given by
\[\al(a\ot b')=\sum \al_1(a)\otimes (\al_2(b))'\]
for $\al\in \U(A), a\ot b'\in A^e$, where $\al(a)=\{x_1, \{x_2, \cdots, \{x_n, a\} \cdots \}\}$
for $\al=x_1\ot x_2\ot\cdots \ot x_n\in \U(A)$ and  $a\in A$.

%
%

\begin{Def} {\rm (\cite{YYY})}
Let $A$ be a Poisson algebra. The smash product
$A^e\# \U(A)$ is called the \emph{quasi-Poisson enveloping algebra}
of $A$ and denoted by $\Q$. The \emph{Poisson enveloping algebra} of
$A$, denoted by $\P$, is defined as the quotient algebra
$\Q/J$, where $J$ is the ideal of $\Q$ generated by $1_A\ot
1'_A\#(ab)-a\ot 1'_A\# b -1_A\ot b' \# a$ for all $a,b\in A$.
\end{Def}

\begin{thm}{\rm (\cite{YYY})} The category of quasi-Poisson modules over $A$
is isomorphic to the category of left $\Q$-modules, and
the category of Poisson modules over $A$ is isomorphic to the category of left $\P$-modules.
\end{thm}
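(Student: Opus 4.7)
The plan is to unpack the definition of the smash product $\Q=A^e\#\U(A)$ and match its module structures term by term with the axioms of quasi-Poisson modules, then deduce the Poisson case by passing to the quotient $\P=\Q/J$.

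First, I would invoke the universal property of the smash product: for a Hopf algebra $H$ and an $H$-module algebra $B$, a left $B\#H$-module is the same datum as a left $B$-module $M$ together with a left $H$-module structure on $M$ satisfying the compatibility
\[ h\cdot(b\cdot m)=\sum (h_{(1)}\cdot b)\cdot(h_{(2)}\cdot m) \]
for all $h\in H$, $b\in B$ and $m\in M$. Specializing to $B=A^e$ and $H=\U(A)$, a left $\Q$-module amounts to an $A$-$A$-bimodule $M$ together with a Lie module action $\{-,-\}_\ast\colon A\times M\to M$, subject to this compatibility.

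Next, since $\U(A)$ is generated as an algebra by the primitive elements $a\in A$, and since the set of elements on which the compatibility holds forms a subalgebra of $\U(A)$, it suffices to verify compatibility on generators. Taking $h=a\in A$ and a basic tensor $b\ot c'\in A^e$, which acts on $m$ by $(b\ot c')\cdot m=bmc$, and using that $\U(A)$ acts diagonally on $A^e$ via the Lie bracket, the compatibility unfolds to
\[ \{a,bmc\}_\ast=\{a,b\}mc+bm\{a,c\}+b\{a,m\}_\ast c. \]
Specializing $c=1$ and $b=1$ (using $\{a,1\}=0$, which follows from the Leibniz rule in characteristic zero) recovers exactly the two Leibniz identities in the definition of a quasi-Poisson module; conversely those two identities together imply the general formula. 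A morphism of $\Q$-modules is by definition a $\K$-linear map intertwining the $A^e$- and $\U(A)$-actions, which is precisely a homomorphism of quasi-Poisson modules. This gives the first category isomorphism.

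Finally, for the Poisson case, a $\P$-module is a $\Q$-module on which every element of $J$ acts as zero. Evaluating the generator $1_A\ot 1'_A\#(a\cdot b)-a\ot 1'_A\#b-1_A\ot b'\#a$ on $m\in M$ yields exactly $\{ab,m\}_\ast-a\{b,m\}_\ast-\{a,m\}_\ast b$, so $J$ annihilates $M$ if and only if the remaining Leibniz rule in the definition of a Poisson module holds. The only mild obstacle in the program is bookkeeping: one must confirm that compatibility on primitive generators together with multiplicativity propagates to all of $\U(A)$, which is a routine induction using the coalgebra axioms and the fact that primitives generate $\U(A)$.
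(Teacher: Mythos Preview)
Your argument is correct. The paper does not actually prove this theorem; it is quoted from \cite{YYY}, and the paragraph following the statement merely records the explicit dictionary
\[
(a\otimes b'\#\arr{\al})m=a\,\arr{\al}(m)\,b,\qquad am=(a\otimes 1'_A\#\1)m,\quad ma=(1_A\otimes a'\#\1)m,\quad \{a,m\}_\ast=(1_A\otimes 1'_A\#a)m,
\]
without verifying that these assignments are mutually inverse or well defined. Your proposal supplies precisely the verification that is being cited: you invoke the standard description of modules over a smash product $B\#H$ as compatible pairs of $B$- and $H$-module structures, specialize to $B=A^e$, $H=\U(A)$, and reduce the compatibility to primitive generators, where it becomes the two quasi-Poisson Leibniz identities. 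The passage to $\P=\Q/J$ by evaluating the generators of $J$ on $m$ is exactly right and recovers the remaining Poisson axiom $\{ab,m\}_\ast=a\{b,m\}_\ast+\{a,m\}_\ast b$.

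Two minor remarks. First, your parenthetical that $\{a,1\}=0$ ``follows from the Leibniz rule in characteristic zero'' is slightly misleading: setting $a=b=1$ in $\{ab,c\}=a\{b,c\}+\{a,c\}b$ gives $\{1,c\}=2\{1,c\}$, hence $\{1,c\}=0$ in any characteristic, and then skew-symmetry gives $\{c,1\}=0$. Second, the claim that the set of $h\in\U(A)$ satisfying the compatibility forms a subalgebra deserves one line of justification (it follows from multiplicativity of $\Delta$ and the fact that $A^e$ is a $\U(A)$-module algebra), which you acknowledge. Neither point is a gap; your outline is a complete and standard proof of the cited result, and is entirely consistent with the correspondence the paper writes down.
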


Given a quasi-Poisson $A$-module $M$, one can define a $\Q$-module $M$ by setting
$(a\ot b' \#\al)m =a\al(m)b$
for all $m\in M$ and $a\ot b' \#\al\in \Q$. Conversely,
given a left $\Q$-module $M$, we set
$am=(a\ot 1'_A\# \1)m$, $ma=(1_A\ot a'\#\1)m$, and $\{a, m\}_\ast=(1_A\ot 1'_A\# a)m$
for all $m\in M, a\in A$ and give a quasi-Poisson $A$-module structure on $M$, where $\1$ is the multiplicative
identity element of $\U(A)$.
The correspondence of Poisson modules
and $\P$-modules is given similarly.
For simplicity, we write $a\ot b' \#\al+J$ as
$a\ot b' \#\al$ when no confusion can arise.

In \cite{BY}, the authors consider the following bicomplex by taking tensor product of the
bar resolution of the $A^e$-module $A$ and the projective resolution
of the trivial $\U(A)$-module $\K$
{\footnotesize
\begin{align}\label{bicomplex of quasi-Poisson modules}
\begin{CD}
&& \cdots && \cdots &&\cdots\\
&&@VVV @VVV @VVV\\
0 @<<< A^4\ot \U(A) @<{\eta^H_{2,0}}<< A^4\ot \U(A)\ot \wedge^1 @<{\eta^H_{2,1}}<<  A^4\ot \U(A)\ot \wedge^2 @<<< \cdots\\
&& @V{\eta^V_{1,0}}VV @V{\eta^V_{1,1}}VV @V{\eta^V_{1,2}}VV\\
0 @<<< A^3\ot \U(A) @<{\eta^H_{1,0}}<< A^3\ot \U(A)\ot \wedge^1 @<{\eta^H_{1,1}}<<  A^3\ot \U(A)\ot \wedge^2 @<<< \cdots\\
&& @V{\eta^V_{0,0}}VV @V{\eta^V_{0,1}}VV @V{\eta^V_{0,2}}VV\\
0 @<<< A^2\ot \U(A) @<{\eta^H_{0,0}}<< A^2\ot \U(A)\ot \wedge^1 @<{\eta^H_{0,1}}<<  A^2\ot \U(A)\ot \wedge^2 @<<< \cdots\\
&& @VVV @VVV @VVV\\
&& 0 && 0 &&0
\end{CD}
\end{align}}
\noindent Its total complex gives a free resolution of $A$ as a $\Q$-module. For any quasi-Poisson module $M$, after applying the functor $\Hom_\Q(-,M)$
to the total complex, we obtain the
quasi-Poisson complex with coefficients in $M$, whose cohomology group is isomorphic the extension group $\Ext_\Q^\ast(A,M)$, and called the \emph{quasi-Poisson cohomology group with coefficients in} $M$, see \cite[Section 3]{BY}.

\section{FGV-Poisson cohomology}

\subsection{A complex of Poisson modules}

Let $(A,\cdot,\{-,-\})$ be a Poisson algebra and $\P$ the Poisson
enveloping algebra of $A$. Set $$C_{i, j}=\begin{cases}
\P\ot \wedge^{j}, &i=0, j\ge 0,\\
\P\ot A^{i+1}\ot \wedge^{j-1}, &i\ge 1, j\ge 1,\\
0, &{\rm otherwise}.
\end{cases}$$
Clearly, $C_{i, j}$ is a left $\P$-module for any $i, j\ge 0$. We consider the following diagram
{\footnotesize $$\begin{CD}
&& \cdots&& \cdots&& \cdots\\
&& @VVV @VVV @VVV\\
 0 @<<< \P\ot A^3 @<{\delta^H_{2,1}}<< \P\ot A^3\ot \wedge^1 @<{\delta^H_{2,2}}<< \P\ot A^3\ot \wedge^2 @<<< \cdots\\
&& @V{\delta^V_{1,1}}VV @V{\delta^V_{1,2}}VV @VV{\delta^V_{1,3}}V\\
0 @<<< \P\ot A^2 @<{\delta^H_{1,1}}<< \P\ot A^2\ot \wedge^1 @<{\delta^H_{1,2}}<< \P\ot A^2\ot \wedge^2 @<<< \cdots\\
&& @V{\delta^V_{0,1}}VV @V{\delta^V_{0,2}}VV @VV{\delta^V_{0,3}}V\\
\P @<{\delta^H_{0,0}}<< \P\ot \wedge^1 @<{\delta^H_{0,1}}<< \P\ot  \wedge^2 @<{\delta^H_{0,2}}<< \P\ot \wedge^3 @<<< \cdots\\
&& @VVV @VVV @VVV\\
&& 0 && 0 && 0
\end{CD}$$}
where $\delta^V_{i,j}: C_{i+1,j}\to C_{i,j}$ and $\delta^H_{i,j}: C_{i,j+1}\to C_{i,j}$
are $\P$-homomorphisms given by
\begin{align*}
& \delta^V_{i, j}((1_A\ot 1'_A\# \1)\ot a_0\ot \cdots\ot a_{i}\ot \omega^j)\\
=& (-1)^i((a_0\ot 1'_A\# \1)\ot a_1\ot \cdots \ot a_i\ot \omega^j\\
&+\sum_{k=0}^{i-1}(-1)^{k+1} (1_A\ot 1'_A\# \1)\ot a_0\ot \cdots \ot a_ka_{k+1}\ot \cdots \ot a_{i}\ot \omega^j\\
& +(-1)^{i+1}(1\ot a'_i\#\1)\ot a_0\ot \cdots\ot a_{i-1}\ot \omega^j)
\end{align*}
for $i\ge 2$ and $j\ge 0$,
\begin{align*}
&\delta^V_{1, j}((1_A\ot 1'_A\# \1)\ot a_0\ot a_1\ot \omega^j)\\
=& (a_0\ot 1'_A\# \1)\ot (a_1\wedge \omega^j)
-(1_A\ot 1'_A\# \1)\ot (a_0a_1\wedge \omega^j)
+(1_A\ot a'_1\#\1)\ot (a_0\wedge \omega^j)
\end{align*}
for $i=0$, $j\ge 1$ and $\omega^j\in \wedge^j$, and
\begin{align*}
&\delta^{H}_{i,j}((1_A\ot 1'_A\#\1)\ot \theta^i\ot (x_0\wedge \cdots \wedge x_j))\\
=& \sum_{k=0}^j(-1)^{k}(1_A\ot 1'_A\# x_k)\ot \theta^i\ot
(x_0\wedge\cdots \widehat{x_k} \cdots \wedge x_j)\\
&+\sum_{0\le p<q\le j} (1_A\ot 1'_A\# \1)\ot \theta^i\ot
(\{x_p,x_q\}\wedge x_0\wedge\cdots \widehat{x_p}\cdots \widehat{x_q} \cdots \wedge x_j)
\end{align*}
for any $i\ge 0$, $j\ge 1$ and $\theta^i\in A^i$.

\begin{prop}\label{cpx chi} The diagram
$C_{\bullet, \bullet}=(C_{i,j},\delta^H_{i,j},\delta^{V}_{i,j})$
is a bicomplex of free $\P$-modules, whose total complex is denoted by
$\chi_\bullet(A)$.
\end{prop}

\begin{proof}
Observe that $\P$ is a quotient algebra of $\Q$ and hence can be viewed
as a $\Q$-module.
By applying the functor $\P\uoQ -$
the bicomplex \eqref{bicomplex of quasi-Poisson modules}
and the natural isomorphism of left $\P$-modules
$\P\uoQ \Q \ot (A^i\ot \wedge^j)
\cong \P\ot A^i\ot \wedge^j$,
we obtain the following bicomplex of Poisson modules
{\footnotesize
\begin{align}\label{bicomplex of Poisson modules}
\xymatrix{
 & \cdots\ar[d] & \cdots \ar[d] & \cdots \ar[d] & \\
0 & \P\ot A^3\ar[l]\ar[d]_{\bar\eta^V_{2,0}}
& \P\ot A^3\ot \wedge^1 \ar[l]_{\bar\eta^H_{3,0}}\ar[d]_{\bar\eta^V_{2,1}}
& \P\ot A^3\ot \wedge^2 \ar[l]_{\bar\eta^H_{3,1}}\ar[d]_{\bar\eta^V_{2,2}} & \cdots\ar[l]\\
0 & \P\ot A^2 \ar[l]\ar@{-->}[d]_{\bar\eta^V_{1,0}}\ar[ddr]|(.3){\delta^V_{0,1}}
& \P\ot A^2\ot \wedge^1 \ar[l]_{\bar\eta^H_{2,0}}\ar@{-->}[d]_{\bar\eta^V_{1,1}}\ar[ddr]|(.3){\delta^V_{0,2}}
& \P\ot A^2\ot \wedge^2 \ar[l]_{\bar\eta^H_{2,1}}\ar@{-->}[d]_{\bar\eta^V_{1,2}}\ar[ddr]|(.3){\delta^V_{0,3}}
& \cdots\ar[l]\\
0 & \P\ot A \ar@{-->}[l]\ar@{-->}[d]\ar@{-->}[dr]_{\varphi_0}
& \P\ot A\ot \wedge^1 \ar@{-->}[l]\ar@{-->}[d]\ar@{-->}[dr]_{ \varphi_1}
 & \P\ot A\ot \wedge^2 \ar@{-->}[l]\ar@{-->}[d]\ar@{-->}[dr]_{\varphi_2}
 & \cdots\ar@{-->}[l]\\
0 & \P\ar[l]\ar[d]
& \P\ot \wedge^1 \ar[l]^{\bar\eta^H_{0,0}}\ar[d]
& \P \ot \wedge^2 \ar[l]^{\bar\eta^H_{0,1}}\ar[d]
& \cdots,\ar[l]\\
& 0 & 0  & 0 &
}\end{align}}
where $\bar\eta^V_{i,j}$, $\bar\eta^H_{i,j}$ are induced by $\id_\P\ot \eta^V_{i,j}$ and $\id_\P\ot \eta^H_{i,j}$,
respectively.
Erasing all of dashed arrows, we obtain the diagram
$C_{\bullet,\bullet}$, and
$\delta_{0,j}^V=\varphi_{j-1}\bar\eta_{1,j-1}^V$,
$\delta_{i,j}^V=\bar\eta_{i+1,j-1}^V$ ($i\ge 2$),
and
$\delta_{0,j}^H=\bar\eta_{0,j}$,
$\delta_{i,j}^H=\bar\eta_{i+1,j-1}^H$ ($i\ge 1$),
where
\[\varphi_j( (1_A\ot 1'_A\# \1) \ot a \ot \omega^j)
=(1_A\ot 1'_A\# \1) \ot (a \wedge \omega^j).\]
It is easily seen that
$\delta^H_{i,j}\delta_{i,j+1}^H=0$, $\delta_{i,j}^V\delta_{i+1,j}^V=0$,
$\delta_{i,j-1}^V\delta_{i,j}^H+\delta_{i-1,j}^H\delta_{i,j}^V=0$ for all $i\ge 2$,
and
$\delta_{0,j}^V\delta_{1,j}^V=\varphi_{j-1}\bar\eta_{1,j-1}^V\bar\eta_{2,j-1}^V=0$.
It remains to check $\delta_{0,1}^V\delta_{1,j}^H+\delta_{0,j}^H\delta_{0,j+1}^V=0$ for all $j\ge 1$.
We only show the case $j=1$, and the general cases can be proved similarly. By definition we have
\begin{align*}
&(\delta_{0,1}^V\delta_{1,1}^H)((1_A\ot 1'_A\# \1) \ot a\ot b\ot x)\\
=& \delta_{0,1}^V((1_A\ot 1'_A\# x) \ot a\ot b)\\
=&\delta_{0,1}^V[(1_A\ot 1'_A\# x)(1_A\ot 1'_A\# \1\ot a\ot b)\\
&-1_A\ot 1'_A\# \1\ot \{x,a\}\ot b-1_A\ot 1'_A\# \1\ot a\ot \{x,b\}]\\
=& (1_A\ot 1'_A\# x)[a\ot 1_A\#\1\ot b-1_A\ot 1'_A\#\1 \ot ab+1_A\ot b'\#\1\ot a]\\
&-\{x,a\}\ot 1'_A\# \1\ot b+1_A\ot 1'_A\# \1\ot \{x,a\}b-1_A\ot b'\#\1\ot\{x,a\}\\
&-a\ot 1'_A\# \1\ot \{x,b\}+1_A\ot 1'_A\# \1\ot a\{x,b\}-1_A\ot \{x,b\}'\#\1\ot a\\
=& (a\ot 1'_A\# x)\ot b-(1_A\ot 1'_A\# x) \ot ab+(1_A\ot b'\# x)\ot a
+(1_A\ot 1'_A\# \1)\ot \{x,a\}b\\
&-(1_A\ot b'\#\1)\ot\{x,a\}+(1_A\ot 1'_A\# \1)\ot a\{x,b\}-
(1_A\ot \{x,b\}'\#\1)\ot a\\
=& -\delta_{0,1}^H((a\ot 1'_A\#\1) \ot b\wedge x-
(1_A\ot 1'_A\#\1) \ot ab\wedge x+(1_A\ot b'\#\1) \ot a\wedge x
)\\
=&- (\delta_{0,1}^H\delta_{0,2}^V)(1_A\ot 1'_A\# \1 \ot a\ot b\ot x),
\end{align*}
which completes the proof.
\end{proof}

\subsection{FGV-Poisson cohomology groups}

Let $M$ be a Possion module over $A$. Recall that the $n$-th FGV-Poisson cohomology groups of $A$ with coefficients in $M$
in \cite{FGV}, denoted by $\HP^n(A, M)$,  is defined as the $n$-th cohomology group
of the total complex  of the following bicomplex $\widetilde{C}^{\bullet, \bullet}(A; M)$
{\footnotesize
$$\begin{CD}
&& \cdots  && \cdots && \cdots\\
&& @AAA  @AAA  @AAA\\
0@>>>\Hom(A^3,M) @>{\delta_H}>> \Hom(A^3\ot \wedge^1, M) @>{\delta_H}>> \Hom(A^3\ot \wedge^2,M) @>>> \cdots\\
&&@A{\delta_V}AA @A{\delta_V}AA @AA{\delta_V}A\\
0@>>>\Hom(A^2,M) @>{\delta_H}>> \Hom(A^2\ot \wedge^1, M) @>{\delta_H}>> \Hom(A^2\ot \wedge^2,M) @>>> \cdots\\
&&@A{\delta_v}AA @A{\delta_v}AA @AA{\delta_v}A\\
M@>{\delta_h}>>\Hom(\wedge^1,M) @>{\delta_h}>> \Hom(\wedge^2, M) @>{\delta_h}>> \Hom(\wedge^3,M) @>>> \cdots\\
&&@AAA @AAA @AAA\\
&& 0  && 0 && 0
\end{CD},$$}
where $\delta_h$ is the Chevalley-Eilenberg differential, $\delta_V$ is
the Hochschild differential, and $\delta_v$ is the composition
of the natural homomorphism $\Hom(\wedge^j, M) \hookrightarrow \Hom(A\ot \wedge^{j-1},M)$ and the Hochschild differential $\Hom(A\ot \wedge^{j-1},M)\to \Hom(A^2\ot\wedge^{j-1},M)$, and $\delta_H: \Hom(A^i\ot \wedge^{j-1},M) \to \Hom(A^i\ot \wedge^{j},M)$ is given by
\begin{align*}
\begin{split}
&(\delta_H f)(a_1\ot \cdots \ot a_i \ot (x_1\wedge \cdots \wedge x_j))\\
=& \sum_{l=1}^j(-1)^{l+1}
\bigg(\{x_l, f(a_1\ot \cdots \ot a_i\ot(x_1\wedge \cdots  \hat{x}_l\cdots \wedge x_j)\}_\ast\\
&~~~~~~-\sum_{t=1}^i f
(a_1\ot \cdots \ot \{x_l,a_t\}\ot\cdots\ot a_i \ot
(x_1\wedge \cdots  \hat{x}_l\cdots \wedge x_j))\bigg)\\
&+\sum_{1\le p<q\le j}(-1)^{p+q}f
(a_1\ot \cdots \ot a_i \ot (\{x_p,x_q\}\wedge
x_1\wedge\cdots\hat{x}_p\cdots \hat{x}_q \cdots \wedge x_j)).
\end{split}
\end{align*}

\begin{rmk}
Note that the differential $\delta_H$ is essentially  a Lie algebra differential. In fact, $\Hom(A^i\ot\wedge^j, M) \cong \Hom(\wedge^j, \Hom(A^i, M))$ and $\Hom(A^i, M)$ is a Lie module over $A$ with the action
given by
\begin{align*}\{x, f\}_\ast(a_1\ot \cdots \ot a_i)=&
\{x, f(a_1\ot \cdots \ot a_i)\}_\ast-
\sum_{t= 1}^i f(a_1\ot \cdots \{x,a_t\}\cdots \ot a_i)
\end{align*}
for any $f\in \Hom(A^i, M)$ and any $a_1\ot \cdots \ot a_i\in A^i$.
\end{rmk}

The total complex of $\widetilde{C}^{\bullet, \bullet}(A; M)$ is of form
\begin{align*}
 & 0\to M\xr{d^0} \Hom(A,M)
\xr{d^1}\Hom(A^2\oplus \wedge^2,M)
\xr{d^2}\Hom(\underset{{i+j=3}\atop{i\neq 1}}{\oplus}A^{i}\ot \wedge^j,A)\\
&\to \cdots \to \Hom(\underset{{i+j=n}\atop{i\neq 1}}{\oplus}A^{i}\ot \wedge^j, M)
\xr{d^n} \Hom(\underset{{i+j=n+1}\atop{i\neq 1}}{\oplus}A^{i}\ot \wedge^j, M)
\to \cdots,
\end{align*} and we call it the \emph{Poisson complex with coefficients in $M$}.
The Poisson complex with coefficients in $A$ is called the \emph{Poisson complex of $A$},
and we simply denote $\HP^n(A;A)$ by $\HP^n(A)$ and call it the $n$-th \emph{FGV-Poisson
cohomology group} of $A$.

\begin{rmk} The lower dimensional Poisson cohomology groups have the following explicit interpretation.

Clearly, $\HP^0(A;M)=\{m\in M~|~\{a, m\}_\ast=0, \forall a\in A \}$.
In particular, $\HP^0(A)$ is the center of the Lie algebra $A$.

$\HP^1(A;M)=\PDer(A;M)/\IPDer(A;M)$ is the outer Poisson
derivations of $A$ with coefficients in $M$, where $\PDer(A;M)\}$ is the set of Poisson derivations
and $\IPDer(A;M)=\{\{-,m\}_*\mid m\in M\}$ the set of inner ones. Recall that a linear map from $A$ to $M$
is called a \emph{Poisson derivation} if it is simultaneously
a derivation in the Lie algebra sense and in the associative sense.
\end{rmk}

For any $f=(f_1,f_0)\in \Ker d^2$, we may define a new Poisson algebra
$A\ft M$, which is called the \emph{extension of $A$ by $M$ along $f$}.
As a $\K$-vector space, $A\ft M=A\oplus M$; and the associative
multiplication and the Lie bracket are given by
\begin{align*}
(a,x)\cdot (a', x')&=(aa', ax'+xa'+f_1(a\ot a')),\\
\{(a,x),(a',x')\}&=(\{a,a'\},\{a,x'\}_\ast-\{a',x\}_\ast+f_0(a\wedge a')\}.
\end{align*}
We have the following standard result, compare also with the deformation theory.

\begin{prop} For any $f\in \Ker d^2$,  $A\ft M$ is a Poisson algebra.
Moreover, for any $f,g\in \Ker d^2$ with $\overline{f}
=\overline{g}$ in $\HP^2(A,M)$, there is an isomorphism of Poisson algebras $A\ft M\simeq A\gt M$.
\end{prop}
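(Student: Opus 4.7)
The plan is to unpack the cocycle condition $d^2 f = 0$ into components matching the three axioms of a Poisson algebra (associativity, Jacobi, and Leibniz), verify each axiom on $A\ft M$ accordingly, and then construct the isomorphism in the second part using a $1$-cochain witnessing $f - g = d^1 h$.

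First I would decompose. Since the Poisson $n$-cochains are $\Hom(\bigoplus_{i+j=n,\, i\neq 1} A^{i}\ot\wedge^{j},M)$, a $2$-cocycle $f=(f_1,f_0)$ has $f_1\in\Hom(A^{\ot 2},M)$ and $f_0\in\Hom(\wedge^2 A,M)$, while $d^2(f_1,f_0)$ lives in $\Hom(A^{\ot 3},M)\oplus\Hom(A^{\ot 2}\ot\wedge^1,M)\oplus\Hom(\wedge^3 A,M)$. By inspection of the bicomplex these three components read: (i) $\delta_V f_1=0$, i.e.\ $f_1$ is a Hochschild $2$-cocycle; (ii) $\delta_h f_0=0$, i.e.\ $f_0$ is a Chevalley--Eilenberg $2$-cocycle; and (iii) a mixed identity $\delta_H f_1 + \delta_v f_0 = 0$, up to sign. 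I would then verify the axioms on $A\ft M$ one at a time. Associativity of $(a,x)\cdot(a',x')=(aa',\,ax'+xa'+f_1(a\ot a'))$ expands, using associativity in $A$ and the bimodule axioms for $M$, to precisely the identity $\delta_V f_1=0$. Analogously, the Jacobi identity for the bracket $\{(a,x),(a',x')\}=(\{a,a'\},\,\{a,x'\}_{\ast}-\{a',x\}_{\ast}+f_0(a\wedge a'))$ collapses, after using Jacobi in $A$ and the Lie-module axioms for $M$, to $\delta_h f_0=0$. Finally, the Leibniz rule reduces, after invoking the quasi-Poisson and the Poisson module axioms on $M$, to the mixed condition (iii).

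For the second part, if $\overline{f}=\overline{g}$ then $f-g=d^1 h$ for some $h\in\Hom(A,M)$, which unwinds to
\[
(f_1-g_1)(a\ot a') = a\,h(a') - h(aa') + h(a)\,a',\qquad
(f_0-g_0)(a\wedge a') = \{a,h(a')\}_{\ast} - \{a',h(a)\}_{\ast} - h(\{a,a'\}).
\]
I would define
\[
\vp: A\ft M \longrightarrow A\gt M, \qquad (a,x)\longmapsto (a,\,x+h(a)),
\]
which is a $\K$-linear isomorphism with inverse $(a,x)\mapsto(a,x-h(a))$. A direct computation using the two displayed identities verifies that $\vp$ preserves both the associative product and the Lie bracket, and hence is an isomorphism of Poisson algebras. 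The main obstacle I anticipate is the sign-bookkeeping in matching the mixed component (iii) of $d^2 f = 0$ to the Leibniz rule on $A\ft M$; this is the one place where the stronger Poisson module axiom $\{ab,m\}_{\ast}=a\{b,m\}_{\ast}+\{a,m\}_{\ast}b$ (as opposed to merely the quasi-Poisson axioms) is indispensable, and it also explains why the row $i=1$ had to be erased from the bicomplex in the first place.
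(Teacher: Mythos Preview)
The paper does not prove this proposition; it introduces it with the sentence ``We have the following standard result, compare also with the deformation theory'' and then states it without proof. Your proposal is a correct fleshing-out of precisely the standard argument: the three components of $d^2(f_1,f_0)=0$ in $\Hom(A^{\ot 3},M)\oplus\Hom(A^{\ot 2}\ot\wedge^1,M)\oplus\Hom(\wedge^3,M)$ are exactly the Hochschild, mixed, and Chevalley--Eilenberg cocycle conditions, and these match respectively the associativity, Leibniz, and Jacobi axioms on $A\ft M$; the isomorphism $(a,x)\mapsto(a,x+h(a))$ built from a coboundary witness $h$ is the expected one, and your verification that it intertwines both products is correct. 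Your remark that the full Poisson-module identity $\{ab,m\}_\ast=a\{b,m\}_\ast+\{a,m\}_\ast b$ (rather than only the quasi-Poisson axioms) is what makes the Leibniz check go through, and that this is tied to the erasure of the $i=1$ row in the bicomplex, is accurate and worth keeping.
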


Applying the complex $\chi_\bullet(A)$, we can interpret the FGV-Poisson cohomology of $A$ by the derived functor  $\mathrm{RHom}_{\P}(\chi_\bullet(A),-)$.

\begin{thm}\label{PC by der func}
 Let $A$ be a Poisson algebra and $M$ a Poisson module over $A$. Then
\[\HP^n(A, M)\cong \H^n \Hom_\P(\chi_\bullet(A), M).\]
\end{thm}
\begin{proof}
By identifying $\Hom_\P(\P\ot A^i\ot \wedge^j, M)$ with $\Hom(A^i\ot \wedge^j,M)$, we easily deduce
that the double complex $PC^{\bullet,\bullet}(A; M)$ is isomorphic to $\widetilde{C}^{\bullet, \bullet}(A; M)$.
The rest of the proof is obvious.
\end{proof}

The theorem says that the FGV-Poisson cohomology is essentially defined by derived functors. An advantage is that the well-developed results and tools in the usual cohomology theory for algebras apply for Poisson cohomology theory as well. For instance, we may obtain the following long exact sequence of Poisson cohomology groups from a short exact sequence of Poisson modules, which is not so obvious from the original definition given by Flato, Gerstenhaber and Voronov.

\begin{prop}
Let $A$ be a Poisson algebra, and
$0\to M_1 \to M_2\to M_3\to 0$
an exact sequence of Poisson $A$-modules. Then we have a long exact sequence
\begin{align*}
0& \to \HP^0(A, M_1) \to \HP^0(A, M_2) \to \HP^0(A, M_3)\to \HP^1(A, M_1) \to \cdots\\
& \to \HP^n(A, M_1) \to \HP^n(A, M_2) \to \HP^n(A, M_3)\to \HP^{n+1}(A, M_1)\to \cdots
\end{align*}
of FGV-Poisson cohomology groups.
\end{prop}
\begin{proof} Let $\P$ be the Poisson enveloping algebra of $A$. Then each Poisson module can be viewed as a $\P$-module, and the sequence $0\to M_1 \to M_2\to M_3\to 0$
gives a distinguished triangle in the derived category $D(\P)$, where each $M_i$ is regarded as a stalk complex of concentrated degree $0$. Applying the cohomology functor $\Hom_{D(\P)}(\chi_\bullet(A), -)$, we obtain
the desired long exact sequence.
\end{proof}

\section{Formal Deformations and deformation quantization}

In \cite[Section 6]{FGV}, a deformation theory for Poisson algebras has been developed.
It is worth pointing out that, even for commutative Poisson algebras, the
noncommutative version of deformation theory is quite useful. In fact, Kontsevich's
deformation quantization is explained as a special case of formal deformations in
the noncommutative sense, and by using Poisson cohomology group we can give some necessary condition for the existence of deformation quantization.

\subsection{Formal Poisson deformations}
Let $(A,\cdot,\{-,-\})$ be a Poisson algebra over $\K$.
Let $\K[[t]]$ and $A[[t]]$ be the formal power series ring
in one variable $t$ with coefficients in $\K$ and $A$, respectively.
A \emph{formal deformation} of the Poisson algebra $A$ means a $\K[[t]]$-Poisson algebra
$(A[[t]], m_t, l_t)$ such that $(A[[t]], m_t)$ and $(A[[t]], l_t)$ are the formal deformations of the associative algebra $(A, \cdot)$
and the Lie algebra $(A, \{-, -\})$, respectively. Clearly the associative multiplication $m_t$ and the Lie bracket $l_t$ are determined by their restrictions to the subset $A$. We may write
\begin{align*}
m_t(a, b) & = m_0(a, b)+tm_1(a, b)+t^2m_2(a, b)+\cdots,\\
l_t(a, b) & = l_0(a, b)+tl_1(a, b)+t^2l_2(a, b)+\cdots,
\end{align*}
for any $a, b\in A$, where $m_i, l_i\colon A\times A\to A$ are $\K$-bilinear maps. By definition, $m_0(a, b)=ab$ and $l_0(a, b)=\{a, b\}$.

As we expected, the FGV-Poisson cohomology controls the deformation of Poisson algebra,
which essentially goes to Flato, Gerstenhaber and
Voronov in \cite[Section 6]{FGV}.

 \subsection{Deformation quantization} In this subsection, we always assume that $\K$ is an algebraically closed field
 and $A$ is a commutative associative algebra.
We will explain in the sequel how to understand Kontsevich's deformation quantization as a special case of formal deformation of
Poisson algebras in our sense. We refer to \cite{DMZ,Ko,St} for more details about deformation quantization theory.

Let $(A, \cdot)$ be a commutative associative $\K$-algebra, and $(A[[t]], m_t)$ a formal deformation of $A$ as an associative algebra.
As usual, $m_t$ is given by a family of $\K$-bilinear maps $\{m_i: A\times A\to A\mid i=0, 1, \cdots, \}$ with \[m_t(a,b)=m_0(a,b)+tm_1(a,b)+t^2m_2(a,b)+\cdots\]
for any $a,b\in A$, and $m_0(a,b) = ab$.
Consider the bilinear map $\{-, -\}\colon A\times A\to A$ given by $\{a,b\}= m_1(a,b)-m_1(b,a)$.
It is easy to check that $P = (A, \cdot, \{-,-\})$ is a commutative Poisson algebra.                         ¡¡¡¡¡¡¡¡¡¡¡¡¡¡

\begin{Def}\cite[Definition 8.4]{DMZ}
Let $A, P$ be as above. Then $P$ is called the \emph{classical limit} of the associative product $m_t$, and $(A[[t]], m_t)$
 is called a \emph{deformation quantization} of the Poisson algebra $P$.
\end{Def}

Recall that a Poisson algebra $(B, \cdot, \{-,-\})$ over a commutative ring $R$ is said to be \emph{standard} if $\lambda\{-,-\}=\mu[-, -]$ for some $\lambda, \mu\in R$, where
$[a, b]=ab-ba$ is the commutator and $\lambda$ is not a zero divisor in $R$. In this case, we also write $\{-,-\}=\frac {\mu} {\lambda}[-, -]$.

Suppose that $(A[[t]], m_t)$ is a deformation quantization of the Poisson algebra $(A, \cdot, \{-, -\})$. We consider the standard Poisson algebra $A[[t]]$ with the bracket given by \[l_t = \frac{1}{t}[-, -]_t\colon A[[t]]\times A[[t]]\to A[[t]], \]
where $[-, -]_t$ is the commutator of $m_t$. Clearly, the Lie bracket $l_t$ is given by
\[l_t(a, b)=\sum\limits_{i\ge 0}(m_{i+1}(a, b)-m_{i+1}(b, a))t^{i}\]
for any $a, b\in A$, since $A$ is a commutative algebra and hence $m_0(a,b)-m_0(b,a)=0$. It follows that
$(A[[t]], m_t, l_t)$ is a formal deformation of the Poisson algebra $(A, \cdot, \{-, -\})$. Thus \textbf{a deformation quantization of a commutative Poisson algebra is a formal Poisson deformation}.

More generally, we may introduce the notion of $n$-deformation quantization.
\begin{prop}
Let $A$ be a commutative algebra over $\K$ and $(A[[t]], m_t)$ a formal deformation of $A$.
If $m_i(a, b)=m_i(b, a)$ for any $a, b\in A$ and any $0\le i\le n-1$,
then $\{a, b\}=m_n(a, b)-m_n(b, a)$ gives a Poisson structure on $A$.
\end{prop}
\begin{proof} By assumption, $[A,A]_t\subseteq t^nA[[t]]$, thus we may consider a standard Poisson structure $l_t=\frac{1}{t^n}[-, -]_t$ on $A[[t]]$, where $[-, -]_t$
is the commutator of $m_t$.
The bracket $l_t$ can be expressed as
$l_t(a, b)=\sum\limits_{i\ge 0}l_i(a, b)t^i$
with each
\[l_i(a, b)=m_{n+i}(a, b)-m_{n+i}(b, a)\]
for any $a, b\in A$.
By comparing the constant terms of two sides of the following equations
\begin{align*}
&l_t(a, b)+l_t(b, a)=0,\\
&l_t(l_t(a, b), c)+l_t(l_t(b, c), a)+l_t(l_t(c, a), b)=0,\\
&l_t(m_t(a, b), c)=m_t(a, l_t(b, c))+m_t(l_t(a, b), c),
\end{align*}
we get
\begin{align*}
&l_0(a, b)+l_0(b, a)=0,\\
&l_0(l_0(a, b), c)+l_0(l_0(b, c), a)+l_0(l_0(c, a), b)=0,\\
&l_0(ab, c)=al_0(b, c)+l_0(a, b)c,
\end{align*}
and hence $l_0$ gives a Poisson structure on $A$.
\end{proof}
In this situation, we denote $\{-, -\}=l_0$ and call the formal deformation $(A[[t]], m_t)$ an \emph{$n$-deformation quantization} of the Poisson algebra $(A, \cdot, \{-, -\})$.

\begin{thm}\label{dq and fd} Let $P=(A, \cdot, \{-,-\})$ be a commutative Poisson algebra.

$(1)$ If $P$ admits an $n$-deformation quantization for some $n\ge 1$, then $P$ can be deformed to a standard Poisson algebra.

$(2)$ Assume further that each formal deformation $(A[[t]], m_t)$
of $(A, \cdot)$ has only standard Poisson structures.
Then $P$ has an $n$-deformation quantization for some positive integer $n$ if and only if $P$ has a formal deformation.
\end{thm}

\begin{proof} (1) Suppose that the Poisson algebra $(A, \cdot, \{-,-\})$ has an $n$-deformation quantization $(A[[t]], m_t)$
satisfying $m_i(a, b)=m_i(b, a)$ for any $a, b\in A$ and $i=0, 1, \cdots, n-1$. By definition, we have
$\{a, b\}=m_n(a, b)-m_n(b, a)$ for all $a, b\in A$.
We consider the standard Poisson algebra $(A[[t]], m_t, l_t)$ with
\[l_t(a, b)=\dfrac{1}{t^n} (m_t(a, b)-m_t(b, a)).\]
Since $ab=m_0(a, b)$ and $\{a, b\}=m_n(a, b)-m_n(b, a)=l_0(a, b)$,
the standard Poisson algebra $(A[[t]], m_t, l_t)$ is just the formal deformation
of $(A, \cdot, \{-, -\})$.

(2)  By (1), we only need to prove that the part of ``if ".
Suppose that $(A, \cdot, \{-,-\})$ has a formal deformation $(A[[t]], m_t, l_t)$, where
$m_t(a, b)=\sum\limits_{i=0}^\infty m_i(a, b)t^i$ and $l_t(a, b)=\sum\limits_{i=0}^\infty l_i(a, b)t^i$.
Since $(A[[t]], m_t)$ has only standard Poisson structures, there exists some $\la(t), \mu(t)\in \K[[t]]$ such that $l_t(a, b)=\frac{\mu(t)}{\la(t)}(m_t(a, b)-m_t(b, a))$.
Without loss of generality, we assume that $\lambda(t)=t^n$ for some $n\ge 1$, and $$\mu(t) =\mu_0+ \mu_1 t+\cdots+\mu_i t^i+\cdots\in \K[[t]]$$ with $\mu_i\in \K, i\ge0$ and $\mu_0\neq 0$.

By comparing the both sides of the above expression of $l_t(a, b)$, we obtain that $m_i(a, b)=m_i(b, a)$ for $i=0, 1, \cdots, n-1$ and $$\{a, b\}=l_0(a, b)=\mu_0(m_n(a, b)-m_n(b, a)).$$ Observe that for any $\nu\in \K$,
the family of $\K$-bilinear maps $m'_i(a, b)=\nu^im_i(a, b)$ ($i\ge 0$) forms a formal deformation of $A$.
So we can choose $\nu=\mu_0^{\frac{1}{n}}$ since $\K$ is algebraically closed. It follows that
$\{a, b\}=m'_n(a, b)-m'_n(b, a)$ for any $a, b\in A$ and therefore $A$ has an $n$-deformation quantization.
\end{proof}


\begin{ex}\label{ex-intdom} Let $A$ be a Poisson algebra which is an integral domain as an algebra, then $A$ has an $n$-deformation quantization for some positive integer $n$ if and only if $A$ has a formal deformation. In fact, any formal deformation $(A[[t]], m_t)$ of an integral domain $A$ is prime, and hence has only standard Poisson structures by a result of Farkas and Letzter, see \cite[Theorem 1.2]{FL} for more details. Then Theorem \ref{dq and fd} does work.
\end{ex}

By Theorem \ref{dq and fd} (1), we can give a necessary condition of
the existence of deformation quantization of a Poisson algebra.

\begin{prop}\label{condition for dq and fd}
Let $P=(A, \cdot, \{-,-\})$ be a nontrivial commutative Poisson algebra. If $\HP^2(A)=0$, then $P$ has no
deformation quantization.
\end{prop}
\begin{proof}
Assume that $(A[[t]], m_t)$ is a deformation quantization of $P$. Then
$(A[[t]], m_t, l_t)$ is a formal deformation of $(A, \cdot, \{-,-\})$, where
\[l_t(a,b)=\frac{1}{t}(m_t(a,b)-m_t(b,a))\]
for all $a, b\in A$.
Since $\HP^2(A)=0$, the formal deformation $(A[[t]], m_t, l_t)$ is equivalent to the trivial one, that
is, there exists an isomorphism
\[g\colon (A[[t]], m_t, l_t)\to (A[[t]], m'_t, l'_t)\] of Poisson $\K[[t]]$-algebras
such that $g(a)\in a+tA[[t]]$ for any $a\in A$, where $m'_t(a, b) =ab$, $l'_t(a,b) =\{a,b\}$ for $a, b\in A$.
Therefore,
\begin{align*}
\{a,b\}= & l'_t(a,b)=g(l_t(g^{-1}(a), g^{-1}(b)))\\
=& g\frac{1}{t}(m_t(g^{-1}(a), g^{-1}(b))-m_t(g^{-1}(b), g^{-1}(a)))\\
=& \frac{1}{t}(m'_t(a,b)-m'_t(b,a))=\frac{1}{t}(ab-ba)=0,
\end{align*}
which leads to a contradiction.
\end{proof}

\section{Comparing FGV-Poisson and LP cohomologies}
Recall the definition of Lichnerowicz-Poisson cohomology for a commutative Poisson
algebra, see \cite{Gu,Hu,Li, Pic, St} for detail.
We set $\xi^i(A)=0$ for any $i<0$,
$\xi^0(A)=A$ and $\xi^1(A)=\Der(A)$. For $i\ge 2$, let
$\xi^\bullet(A)$ be the subspace of $\Hom(\wedge^\bullet, A)$ consisting of all skew-symmetric multiderivations of $A$.
The subcomplex $(\xi^\bullet(A), d)$ of the Chevalley-Eilenberg complex $(\Hom(\wedge^\bullet, A), d)$
is called the \emph{Lichnerowicz-Poisson complex of $A$},
or simply \emph{LP-complex},
and its $n$-th cohomology group, denoted by $\H_{LP}^n(A)$, is called the \emph{$n$-th Lichnerowicz-Poisson (LP for short) cohomology group of $A$},  see \cite{Pic}.

\begin{rmk} Let $A$ be a Poisson algebra, then by definition we have
\[ \HP^0(A)= \H_{LP}^0(A),\ \ \HP^1(A)= \H_{LP}^1(A).\]
In fact, easy calculation shows that the $0$-th LP-cohomology $\H_{LP}^0(A)$ is the center of the Lie algebra $A$,  and $\H_{LP}^1(A)$ is exactly the set of Poisson derivations.
\end{rmk}

Moreover, by direct calculation, we show that the LP-cohomology
relates to the FGV-Poisson cohomology closely.

We simply denote the Poisson bicomplex $PC^{\bullet,\bullet}(A, A)$ by $PC^{\bullet, \bullet}$.
Considering the spectral sequence induced by the first filtration of the bicomplex $PC^{\bullet, \bullet}$, we obtain
a complex
$$0\to A\to \H_\I^0(PC^{\bullet,1}) \to \H_\I^0(PC^{\bullet,2})\to  \cdots\to \H_\I^0(PC^{\bullet,j}) \to \H_\I^0(PC^{\bullet,j+1})\to \cdots,$$
where
\[\H^0_\I(PC^{\bullet,j})=\{f\in \Hom(\wedge^j,A)\mid \delta_v(f)=0\}\]
is the space of all skew-symmetric $n$-fold derivations of $A$.

\begin{prop} Keep the above notation, we have
\[ \H_{LP}^n(A)\cong E_{\II 2}^{0, n},\]
where $E_{\II 2}^{0, n}$ is the $(n, 0)$-term in the spectral sequence given by the second filtration of the FGV-Poisson bicomplex.
\end{prop}

\begin{rmk}
We would like to mention that the FGV-Poisson cohomology controls the formal deformations of a Poisson algebra (see Section 4.1),
while the LP-cohomology controls those deformations which only deform the Lie bracket.
\end{rmk}

To study the further relation between FGV-Poisson cohomology and LP-cohomology,
we consider the first filtration of the FGV-Poisson bicomplex,
whose $(n, 0)$-term is given by
\begin{align*}
\zeta^1(A)=& \{f\in \Hom_{\Bbbk}(A, A)\mid \{x, f(y)\}+\{y, f(x)\}-f(\{x, y\})=0\},\\
\zeta^n(A)=& \{f\in \Hom_{\Bbbk}(A^n, A)\mid \{x, f(a_1, \cdots, a_n)\}=\sum\limits_{i=1}^n f(a_1, \cdots, \{x, a_i\}, \cdots, a_n) \},
\end{align*}
for $n\ge 2$.

The complex $\zeta^\bullet$ is a subcomplex of the Hochschild complex. We call $\zeta^\bullet$ the  \emph{Hochschild-Poisson complex}, and its $n$-th cohomology $\H_{HP}^n(A)$ the $n$-\emph{Hochschild-Poisson cohomology group}.

\begin{prop}\label{FGV=HP+LP}
Let $A$ be a commutative Poisson algebra over $\Bbbk$, which is finitely generated and smooth as an associative algebra. Then there is an isomorphism
\[\HP^2(A)\cong \H_{HP}^2(A)\oplus \H_{LP}^2(A).\]
\end{prop}
\begin{proof}
Let $\al\in \Hom_{\Bbbk}(A^2, A)$ and $\beta\in \Hom_{\Bbbk}(\wedge^2, A)$ such that $(\al, \beta)$ is a FGV-Poisson 2-cocycle.
By definition, $\al$ is a Hochschild 2-cocycle.
Since $A$ is finitely generated and smooth, by HKR theorem we have $\HH^2(A)=\wedge^2_A \Der(A)$, see \cite{HKR}.
Without loss of generality, we may assume that
\[\al(f, g)=a(\delta_1(f)\delta_2(g)-\delta_1(g)\delta_2(f))\]
for some $a\in A$ and $\delta_1, \delta_2\in \Der(A)$.
By the definition of FGV-Poisson cohomology, we have \begin{align*}
f\beta(g, h)-\beta(fg, h)+\beta(f, h)g
=\al(\{h, f\}, g)+\al(f, \{h, g\})-\{h, \al(f, g)\}
\end{align*}
for any $f, g, h\in A$.
Note that for $f, g$, the left hand side is symmetric and the right one is anti-symmetric. It forces that both sides equal 0, which means  that $\al$ is a Hochschild-Poisson 2-cocycle and
$\beta$ is a Lichnerowicz-Poisson 2-cocycle.

On the other hand, if $(\al, \beta)$ is a FGV-Poisson 2-coboundary, then $\al$ is a Hochschild-Poisson 2-coboundary and
$\beta$ is a Lichnerowicz-Poisson 2-coboundary. Then we obtain a well-defined linear map
\[\Psi\colon \HP^2(A)\to \H_{HP}^2(A)\oplus \H_{LP}^2(A),\ \overline{(\al,\beta)}\mapsto(\overline{\al}, \overline{\beta}).\]

Conversely, let $\al$ be a Hochschild-Poisson 2-cocycle and $\beta$ be a Lichnerowicz-Poisson 2-cocycle.
Clearly, $(\al, \beta)$ is a FGV-Poisson 2-cocycle.
If moreover, $\al$ is a Hochschild-Poisson 2-coboundary, and $\beta$ is a Lichnerowicz-Poisson 2-coboundary,
then there exist a Lie derivation $\varphi_1$ and a derivation $\varphi_2$
such that
\begin{align*}
\al(f, g)=& f\varphi_1(g)-\varphi_1(fg)+\varphi_1(f)g,\\
\beta(f, g)=& \{\varphi_2(f), g\}+\{f, \varphi_2(g)\}-\varphi_2(\{f, g\}).
\end{align*}
It is easily seen that $(\al, \beta)\in \Hom_{\Bbbk}(A^2\oplus \wedge^2, A)$ is a FGV-2-coboundary
given by $\varphi_1+\varphi_2$. Thus we obtain a map
\[\H_{HP}^2(A)\oplus \H_{LP}^2(A)\to \HP^2(A),\ (\overline{\al}, \overline{\beta})\mapsto\overline{(\al,\beta)}£¬\] which
is converse to $\Psi$, and the desired isomorphism follows.
\end{proof}

\begin{ex}
Let $A=\Bbbk[x, y]$ be a polynomial algebra in two variables $x, y$ with
the Poisson bracket given by $\{-, -\}=a_0\partial_x \wedge \partial_y$
for some $a_0\in A$,
where $\partial_x, \partial_y$ are the partial derivatives with respect
to the variables $x$ and $y$, respectively. This bracket was the original
Poisson bracket studied by many people including Poisson \cite{Po} when $\Bbbk=\mathbb{R}$.

First by HKR Theorem\cite{HKR}, $\HH^2(A)=A (\partial_x\wedge \partial_y)$.
Then easy calculation shows that the 2nd Hochschild-Poisson cohomology group is $\Bbbk(\overline{\partial_x\wedge \partial_y})$.
Applying the duality between LP-cohomology and LP-homology,
see Equation (7.2) in \cite{Hu2} or Remark 3.6 in \cite{LWW}, we know
the 2nd LP-cohomology group vanishes.
Therefore, by Proposition \ref{FGV=HP+LP}, we get $\HP^2(A)\cong \Bbbk$.

Next we consider the formal deformation and deformation quantization of $A$.
It is direct to show that $A$ has a deformation quantization $(A[[t]], m_t)$ with the star product $m_t$ given by
\[m_t(f, g)=\sum_{n=0}^\infty m_n(f, g)t^n=\sum_{n=0}^\infty \dfrac{t^n}{n!} \partial_x^n(f)\partial_y^n(g)\]
for all $f, g\in A\subset A[[t]]$. Furthermore, $A$ has a formal deformation $(A[[t]], m_t, l_t)$ in the noncommutative sense, where
\begin{align*}l_t(f, g)=\sum_{n=0}^\infty l_n(f, g)t^n&=\sum_{n=0}^\infty \dfrac{t^{n}}{(n+1)!} (\partial_x^{n+1}(f)\partial_y^{n+1}(g)-\partial_x^{n+1}(g)\partial_y^{n+1}(f))\\
&= \dfrac{1}{t}(m_t(f,g)-m_t(g,f))
\end{align*}
for all $f, g\in A$.
Since $A$ is  integral domain, we know that $(A[[t]], m_t)$ is a prime algebra.
By Example 4.4 and Proposition 4.3, this formal deformation exactly gives the above deformation quantization.
\end{ex}

\section{Connection with Quasi-Poisson cohomology groups}

In section 3, we have introduced the complex $\chi_\bullet(A)$ for a Poisson algebra $A$ and applied it to
study the Poisson cohomology. However, whether $\chi_\bullet(A)$ is quasi-isomorphic to a stalk complex of some Poisson module, is not
known yet. We will make some discussion on this question.

Let $\Q$ be the quasi-Poisson enveloping algebra of $(A,\cdot,\{-,-\})$
and $\Omega^2(A)$ be the second syzygy of $A$ as an $A^e$-module.
More precisely, $\Omega^2(A)$
is the quotient module of $A^e$-module $A^4$ modulo the submodule generated by $\{ a\ot b\ot c \ot 1_A-1_A\ot ab\ot c\ot 1_A+1\ot a\ot bc\ot 1_A-1\ot a\ot b\ot c \mid a,b, c\in A\}$. Note that $\Omega^2(A)$ is also a quotient module of $A^4$ as a $\Q$-module.
Similar to the construction of the free $\Q$-resolution of $A$ in \cite{BY},
we consider the bar resolution of the $A^e$-module $\Omega^2(A)$
\begin{align*}
\mathbb{S}_\bullet\colon \cdots \to A^{i+4} \to A^{i+3} \to \cdots \to A^5\to A^4 \to 0
\end{align*}
 of $\Omega^2(A)$, and the Koszul resolution of $\K$ as a trivial $\U(A)$-module
\begin{align*}
\mathbb{K}_\bullet\colon \cdots \to \U(A)\ot \wedge^j \to \U(A)\ot \wedge^{j-1} \to \cdots \to \U(A)\ot \wedge^1 \to \U(A)\to 0.
\end{align*}

By taking the total complex of the tensor product of $\mathbb{S}_\bullet$  and $\mathbb{K}_\bullet$, we obtain a free resolution of the $\Q$-module $\Omega^2(A)$
\begin{align*}
\mathbb{T}'_\bullet\colon \cdots & \to \underset{i+j=n}{\oplus}A^{i+4}\ot \U(A)\ot \wedge^j
\to \underset{i+j=n-1}{\oplus}A^{i+4}\ot \U(A)\ot \wedge^j \to \cdots\\
& \to A^5\ot \U(A)\oplus A^4\ot \U(A)\ot \wedge^1\to A^4\ot \U(A) \to 0.
\end{align*}

On the other hand, we may apply the functor $A^2\ot -$ to the resolution
$\mathbb{K}_\bullet$, and obtain a free resolution
of the $\Q$-module $A^2$
\begin{align*}
\mathbb{T}''_\bullet\colon \cdots  \to A^2\ot \U(A)\ot \wedge^n
\to A^2\ot \U(A)\ot \wedge^{n-1} \to \cdots
 \to A^2\ot\U(A)\to A^2 \to 0.
\end{align*}

By the definition of the Poisson bicomplex $C_{\bullet,\bullet}$, we immediately get a short exact sequence of complexes
\[0 \to \P\underset{\Q}{\otimes} \mathbb{T}''_\bullet
\to \chi_\bullet(A) \to \P\underset{\Q}{\otimes} \mathbb{T}'_\bullet \to 0,\]
thus we have the following long exact sequence, relating the homologies of the characteristic
complex with some torsion groups of quasi-Poisson modules.
\begin{prop}
Keep the above notations.
Then there is a long exact sequence
\begin{align*}
\cdots & \to \Tor^\Q_n(\P,A^2) \to \H_n(\chi_\bullet(A))
\to \Tor^\Q_{n-2}(\P, \Omega^2(A))\to \Tor^\Q_{n-1}(\P,A^2) \to \cdots\\
& \to \Tor^\Q_1(\P,A^2) \to \H_1(\chi_\bullet(A))
\to 0 \to \P\ot A^2 \to \H_0(\chi_\bullet(A))\to 0.
\end{align*}
\end{prop}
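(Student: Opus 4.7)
My plan is to derive the stated sequence as the homology long exact sequence of a short exact sequence of chain complexes
\[ 0 \to \P\uoQ \mathscr{T}''_\bullet \to \chi_\bullet(A) \to Q_\bullet \to 0, \]
where the subcomplex is the bottom row $\{C_{0,j}\}_{j\ge 0}$ of $C_{\bullet,\bullet}$ and $Q_\bullet$ is the resulting quotient, which I will identify (after a total-degree shift of $+2$) with $\P\uoQ \mathscr{T}'_\bullet$.

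First I would verify that the bottom row is a genuine subcomplex of $\chi_\bullet(A)=\Tot(C_{\bullet,\bullet})$: every vertical arrow in the bicomplex has the form $\delta^V_{i,j}\colon C_{i+1,j}\to C_{i,j}$, so no differential exits $C_{0,j}$ vertically, while the horizontal $\delta^H_{0,j}$ keeps us inside the bottom row. Using the isomorphism $A^2\ot \U(A)\cong \Q$ of left $\Q$-modules, one gets $\P\uoQ(A^2\ot \U(A)\ot \wedge^j)\cong \P\ot \wedge^j = C_{0,j}$, and the restricted horizontal differential is exactly the image of the Koszul boundary under $\P\uoQ-$, which identifies the bottom row with $\P\uoQ\mathscr{T}''_\bullet$. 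For $Q_\bullet$, the term of total degree $n$ is $\bigoplus_{i+j=n,\, i,j\ge 1}\P\ot A^{i+1}\ot \wedge^{j-1}$; reindexing $(k,\ell)=(i-1,j-1)$ and invoking $\P\uoQ(A^{k+4}\ot \U(A)\ot \wedge^\ell)\cong \P\ot A^{k+2}\ot \wedge^\ell$ yields $Q_n = (\P\uoQ\mathscr{T}'_\bullet)_{n-2}$. A direct comparison of the formulas for $\bar\eta^H_{i,j}, \bar\eta^V_{i,j}$ with the differentials of the total complex of $\widehat{\mathscr{S}}_\bullet \ot \widehat{\mathscr{K}}_\bullet$ will show that the induced differentials on $Q_\bullet$ agree with those of $\P\uoQ \mathscr{T}'_\bullet$ under this identification.

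Granting the SES, I invoke the homology long exact sequence together with $\H_n(\P\uoQ\mathscr{T}''_\bullet)=\Tor^\Q_n(\P,A^2)$ and $\H_n(Q_\bullet)=\H_{n-2}(\P\uoQ\mathscr{T}'_\bullet)=\Tor^\Q_{n-2}(\P,\Omega^2(A))$, the two identifications being immediate from $\mathscr{T}''_\bullet$ and $\mathscr{T}'_\bullet$ being free $\Q$-resolutions. Substituting produces exactly the asserted sequence. For the low-degree tail, using $\Tor^\Q_k(\P,\Omega^2(A))=0$ for $k<0$ kills the map out of $\H_1(\chi_\bullet(A))$ and yields $\H_0(\chi_\bullet(A))\cong \Tor^\Q_0(\P,A^2)=\P\uoQ A^2$. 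The main obstacle is the bookkeeping verification that the snake-lemma connecting homomorphism, induced by the ``mixed'' arrows $\delta^V_{0,j}=\varphi_{j-1}\bar\eta^V_{1,j-1}$ that link the upper part of $C_{\bullet,\bullet}$ to its bottom row, really matches the abstract boundary map from the SES; this is sign-chasing of the same flavor as the compatibility checks needed to prove $C_{\bullet,\bullet}$ is a bicomplex, and introduces no genuinely new difficulty.
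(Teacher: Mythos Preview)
Your proposal is correct and follows essentially the same route as the paper: the paper simply asserts the short exact sequence of complexes
\[0 \to \P\uoQ \mathscr{T}''_\bullet \to \chi_\bullet(A) \to \P\uoQ \mathscr{T}'_\bullet \to 0\]
and reads off the long exact sequence in homology, while you supply the bookkeeping (identifying the bottom row as the subcomplex $\P\uoQ\mathscr{T}''_\bullet$, reindexing the quotient to match $\P\uoQ\mathscr{T}'_\bullet$ shifted by two, and computing the Tor groups from the free $\Q$-resolutions). Your treatment of the low-degree tail via $\Tor^\Q_{k}(\P,\Omega^2(A))=0$ for $k<0$ is also exactly what is implicit in the paper's displayed sequence.
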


Now we turn to the calculation of Poisson cohomology groups.
Let $QC^\bullet(\Omega^2(A),M)$ be the complex
\begin{align*}
0 & \to \Hom(A^2, M) \to  \Hom(A^3\oplus A^2\ot \wedge^1, M) \to \cdots
\to \Hom(\underset{i+j=n-1}{\oplus} A^{i+2}\ot \wedge^j,M)\\
 & \to \Hom(\underset{i+j=n}{\oplus} A^{i+2}\ot \wedge^j,M)\to \cdots,
\end{align*}
which is obtained by applying the functor $\Hom_\Q(-, M)$ to the resolution $\mathbb{T}'_\bullet$.
We deduce the following easy result.

\begin{lem}\label{proj resol of omega^2(A)}
Let $M$ be a quasi-Poisson module over $A$.
Then
\[\Ext^n_\Q(\Omega^2(A),M)\cong \H^n(QC^\bullet(\Omega^2(A),M)), \forall n\ge 0.\]
\end{lem}

Using the projective resolution  $\mathbb{T}''_\bullet$, we have the following lemma.
\begin{lem}\label{proj resol of A^2}
Let $M$ be a quasi-Poisson module. Then
\[\Ext^n_\Q(A^2,M)\cong \HL^n(A, M),\]
where $\HL^n(A, M)$ is the $n$-th Lie algebra cohomology of the Lie algebra $(A, \{-, -\})$ with coefficients in $M$.
\end{lem}
\begin{proof} Here we use the fact that $A^2$ is a free $A^e$-module. In fact, applying the isomorphisms $\Hom_\Q(A^2\ot \U(A)\ot \wedge^n, M)\cong
\Hom(\wedge^n, M)$, we know that $\Hom_\Q(\mathbb{T}''_\bullet, M)$ is isomorphic to the Chevalley-Eilenberg complex
\[CE^\bullet: 0 \to M\to \Hom(\wedge^1, M) \to \Hom(\wedge^2, M)
\to \cdots \to \Hom(\wedge^n,M)\to \cdots,\]
and the desired isomorphism follows.
\end{proof}

Let $PC^\bullet(A,M)$ denote the Poisson complex of $A$ with coefficients in $M$.
Clearly we have a short exact sequence of complexes
\[0\to QC^\bullet(\Omega^2(A),M)[-2] \to PC^\bullet(A,M) \to CE^\bullet \to 0,\]
and combined with
Lemma \ref{proj resol of omega^2(A)} and Lemma \ref{proj resol of A^2},
we obtain the following long exact sequence.

\begin{thm}\label{l.e.s. of Poisson modules}
Let $A$ be a Poisson algebra and $M$ a Poisson module over $A$.
Then there exists a long exact sequence
\begin{align}\label{les.pcoh}
\begin{split}
0 & \to \HP^0(A, M) \to \HL^0(A, M) \to 0
\to \HP^1(A, M) \to \HL^1(A, M)\to \\
&\Hom_\Q(\Omega^2(A), M) \to \HP^2(A, M) \to \HL^2(A, M) \to \cdots
 \to \Ext_\Q^{n-2}(\Omega^2(A),M) \\&\to \HP^n(A,M) \to \HL^n(A, M)\to
\Ext_\Q^{n-1}(\Omega^2(A),M) \to \cdots.
\end{split}
\end{align}
\end{thm}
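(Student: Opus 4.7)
The plan is to apply the standard long exact sequence in cohomology to the short exact sequence of complexes
\[
0\to QC^\bullet(\Omega^2(A),M)[-2] \to PC^\bullet(A,M) \to CE^\bullet \to 0
\]
recorded just before the theorem, and then to identify the cohomology groups of the two outer complexes using Lemmas \ref{proj resol of omega^2(A)} and \ref{proj resol of A^2}.

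First, I would verify degreewise that this sequence of complexes really is short exact. In degree $n$, the Poisson complex is $\Hom(\oplus_{i+j=n,\,i\neq 1} A^i\ot \wedge^j, M)$, which canonically splits as $\Hom(\wedge^n,M)\oplus \Hom(\oplus_{i\ge 2,\,i+j=n} A^i\ot \wedge^j,M)$. The first summand is the $n$-th term of $CE^\bullet$, and the second summand, after the index substitution $i'=i-2$, coincides with the $(n-2)$-th term of $QC^\bullet(\Omega^2(A),M)$, which is exactly the $n$-th term of the shift $QC^\bullet(\Omega^2(A),M)[-2]$. What remains is to check that the inclusion $CE^\bullet\hookrightarrow PC^\bullet(A,M)$ and the projection onto $QC^\bullet(\Omega^2(A),M)[-2]$ are chain maps; this is immediate from the explicit description of the differentials $d^n$ of $PC^\bullet(A,M)$ given by $\delta_h,\delta_v,\delta_V,\delta_H$, since the $i=0$ column of the Poisson bicomplex is closed under the horizontal differential and the quotient by this column yields precisely the $QC$-style differentials on the remaining columns.

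Next, I would apply the snake/zigzag lemma to obtain the long cohomology exact sequence
\[
\cdots\to \H^n\bigl(QC^\bullet(\Omega^2(A),M)[-2]\bigr)\to \HP^n(A,M)\to \H^n(CE^\bullet)\to \H^{n+1}\bigl(QC^\bullet(\Omega^2(A),M)[-2]\bigr)\to\cdots.
\]
Since $\H^n\bigl(QC^\bullet(\Omega^2(A),M)[-2]\bigr)=\H^{n-2}\bigl(QC^\bullet(\Omega^2(A),M)\bigr)$ (vanishing for $n<2$), Lemma \ref{proj resol of omega^2(A)} identifies this group with $\Ext^{n-2}_\Q(\Omega^2(A),M)$. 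Similarly, Lemma \ref{proj resol of A^2} (together with the standard identification of $CE^\bullet$ with the Chevalley--Eilenberg complex computing $\Ext^\ast_{\U(A)}(\K,M)$) identifies $\H^n(CE^\bullet)$ with $\Ext^n_{\U(A)}(\K,M)$. Substituting these identifications into the zigzag sequence yields the stated long exact sequence \eqref{les.pcoh}, with the low-degree zero reflecting the two-fold shift of the $QC$-complex.

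The only genuine verification needed is the degreewise splitting and the chain-map property of the inclusion $CE^\bullet\hookrightarrow PC^\bullet(A,M)$, and this is the place where one must be careful: the Poisson bicomplex differs from the quasi-Poisson bicomplex precisely in that one row is erased and the vertical arrows are modified by $\varphi_\bullet$. I expect the only subtlety to be checking that the connecting maps described by $\delta^V_{0,j}=\varphi_{j-1}\bar\eta^V_{1,j-1}$ in the Proposition-Definition are exactly what is needed to make the inclusion a chain map after dualizing with $\Hom(-,M)$. Once that compatibility is in hand, the rest of the argument is entirely formal.
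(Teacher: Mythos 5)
Your overall strategy is exactly the paper's: the paper's entire proof consists of observing the short exact sequence of complexes $0\to QC^\bullet(\Omega^2(A),M)[-2]\to PC^\bullet(A,M)\to CE^\bullet\to 0$, taking the associated long exact cohomology sequence, and substituting the identifications from Lemma \ref{proj resol of omega^2(A)} and Lemma \ref{proj resol of A^2}. Your displayed zigzag sequence and the degree bookkeeping (including the vanishing in degrees $0$ and $1$ coming from the shift by $2$) are correct and match \eqref{les.pcoh}.

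However, in the verification paragraph --- the step you yourself single out as the only genuine thing to check --- you have the roles of subcomplex and quotient reversed, and the check as you describe it would fail. You propose to show that ``the inclusion $CE^\bullet\hookrightarrow PC^\bullet(A,M)$'' is a chain map, on the grounds that the $i=0$ row is closed under the differential and that quotienting by it leaves the $QC$-part. But the $i=0$ row $\Hom(\wedge^\bullet,M)$ is \emph{not} a subcomplex of the total complex: the total differential of $f\in\Hom(\wedge^j,M)$ has, besides the Chevalley--Eilenberg component $\delta_h f$, the component $\delta_v f\in\Hom(A^2\ot\wedge^{j-1},M)$, which is nonzero in general (it measures the failure of $f$ to be a derivation in its arguments). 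What is true --- and what makes the short exact sequence you wrote at the outset correct --- is the opposite: since every component of the total differential either preserves or increases the index $i$, the rows with $i\ge 2$ form a subcomplex, which is $QC^\bullet(\Omega^2(A),M)[-2]$, and $CE^\bullet$ is the quotient. This orientation is also forced by the shape of \eqref{les.pcoh} itself, whose connecting homomorphism goes $\Ext^n_{\U(A)}(\K,M)\to\Ext^{n-1}_\Q(\Omega^2(A),M)$, i.e.\ from the cohomology of the quotient to that of the subcomplex in the next degree. Once this is corrected, the rest of your argument goes through and coincides with the paper's.
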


We recall a useful spectral sequence introduced in our previous work \cite{BY}.

\begin{prop}[\cite{BY}, Lemma 5.2] Let $M, N$ be modules over $\Q$.
Then we have a spectral sequence
\[\Ext_{\U(A)}^q (\K, \Ext_{A^e}^p(M,N))\Longrightarrow \Ext_{\Q}^{p+q}(M,N).\]

\end{prop}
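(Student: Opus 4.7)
The proposition is a Grothendieck composition-of-functors spectral sequence, and my plan is to factor $\Hom_\Q(M,-)$ through the category of $\U(A)$-modules. I would first equip $\Hom_{A^e}(M,N)$ with a natural $\U(A)$-module structure coming from the Hopf algebra structure on $\U(A)$: for a primitive element $x\in A$ and $f\in \Hom_{A^e}(M,N)$ set
\[(x\cdot f)(m) \;=\; x f(m) - f(xm),\]
and extend via the coproduct and antipode. A direct check, using that $A^e$ is a $\U(A)$-module algebra and that the smash-product relation $[x,a] = x\cdot a$ holds in $\Q$, shows that this action really preserves $A^e$-linearity. Moreover, $f$ is $\U(A)$-invariant exactly when $f(xm)=xf(m)$ for all $x\in A$, so the $\U(A)$-invariants of $\Hom_{A^e}(M,N)$ are precisely the $\Q$-linear maps. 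This gives the composition identity
\[\Hom_\Q(M,-) \;=\; \Hom_{\U(A)}(\K,-)\circ \Hom_{A^e}(M,-).\]

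The second step is to verify that $\Hom_{A^e}(M,-)$ sends injective $\Q$-modules to injective $\U(A)$-modules, which I propose to do by exhibiting an exact left adjoint. Define a functor $G$ from $\U(A)$-modules to $\Q$-modules by $G(V)=V\otimes_\K M$, where $A^e$ acts only on the $M$-factor and $\U(A)$ acts diagonally via $x(v\otimes m)=xv\otimes m + v\otimes xm$ for $x\in A$. A short computation verifies the compatibility in $\Q = A^e\#\U(A)$, so $G(V)$ is a genuine $\Q$-module. The expected tensor-hom adjunction
\[\Hom_\Q(V\otimes M,\, N) \;\cong\; \Hom_{\U(A)}\bigl(V,\, \Hom_{A^e}(M,N)\bigr)\]
is then readily checked at the level of underlying maps, exhibiting $G$ as the left adjoint of $\Hom_{A^e}(M,-)$. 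Because we work over the field $\K$, the functor $G=-\otimes_\K M$ is exact, and hence its right adjoint preserves injectives.

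Combining both ingredients, Grothendieck's spectral sequence for the composition of left-exact functors yields
\[E_2^{p,q} \;=\; \Ext_{\U(A)}^q\!\bigl(\K,\, \Ext_{A^e}^p(M,N)\bigr) \;\Longrightarrow\; \Ext_\Q^{p+q}(M,N),\]
as desired. The only nontrivial verification in the whole argument is that the prescribed diagonal action really promotes $V\otimes_\K M$ to a $\Q$-module; once this is in hand, both the factorization of functors and the preservation of injectives are formal, and the spectral sequence follows by a standard appeal to the Grothendieck machine.
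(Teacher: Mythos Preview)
Your argument is correct and is the standard Grothendieck spectral sequence derivation for a smash product $A^e\#\U(A)$. Note, however, that the present paper does not actually prove this proposition: it is quoted from \cite{BY}, Section~5, where the spectral sequence is established. Your factorization
\[
\Hom_\Q(M,-)\;=\;\Hom_{\U(A)}(\K,-)\circ\Hom_{A^e}(M,-)
\]
together with the exact left adjoint $V\mapsto V\otimes_\K M$ is precisely the mechanism one expects for a smash product with a cocommutative Hopf algebra, and is almost certainly the argument given in \cite{BY} as well. The verifications you outline (that the $\U(A)$-action preserves $A^e$-linearity, that invariants recover $\Q$-maps, and that $V\otimes_\K M$ is a bona fide $\Q$-module via the diagonal action) are all routine consequences of the module-algebra identity $x(am)=(x\cdot a)m+a(xm)$ in $\Q$, exactly as you indicate.

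One cosmetic point: with the standard Grothendieck convention $E_2^{p,q}=(R^pF)(R^qG)$, the outer $\Ext_{\U(A)}$ carries the first index and the inner $\Ext_{A^e}$ the second; the paper (and your final display) has the roles of $p$ and $q$ transposed relative to that convention. This is harmless, but worth being aware of if you later need to identify differentials or edge maps.
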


In conclusion, the above results provide us a way to read the information of Poisson cohomology from the Lie algebra cohomology
and the quasi-Poisson cohomology.

\begin{ex}
Let $A$ be the $\K$-algebra of upper triangular $2\times 2$ matrices.
This algebra is known to be the path algebra of the quiver of $\mathbb{A}_2$ type.

Consider the standard Poisson algebra. Clearly, $A$ is a hereditary algebra as an associative algebra
and hence
$\HH^n(A)=0$ for all $n\ge 1$. By direct computation, we have
$\HL^0(A)=\K$, $\HL^1(A)=\K^2$, $\HL^2(A)=\K$, and $\HL^n(A)=0$ for all $n\ge 3$, where $\HL^i(A) = \Ext_{\U(A)}^i(\K, A)$ for each $i$.

Applying the above spectral sequence and by some direct calculations,
we know that $\Hom_\Q(\Omega^2(A),A)=\K^3$, $\Ext_\Q(\Omega^2(A),A)=\K^6$,
$\Ext_\Q^2(\Omega^2(A),A)=\K^3$ and $\Ext_\Q^n(\Omega^2(A),A)=0$ for all $n\ge 3$.
Now it follows from Theorem \ref{l.e.s. of Poisson modules} that
\[\HP^0(A)=\K,\ \HP^1(A)=0,\ \HP^2(A)=\K,\ \HP^3(A)=\K^5,\ \HP^4(A)=\K^3,\] and
$\HP^n(A)=0$ for all $n\ge 5$.
\end{ex}

\section{A further example: $\mathbb{M}_2(\K)$}

Let $A=\mathbb{M}_2(\K)$ be the standard Poisson algebra, where $\mathbb{M}_2(\K)$ is
the algebra of $2\times 2$ matrices with entries in $\K$. Clearly, as a Lie algebra,
$A=\K\cdot 1_A \oplus \mathfrak{s}\mathfrak{l}_2(\K)$, where $\K\cdot 1_A$ is an abelian Lie algebra of dimension 1 and $\mathfrak{s}\mathfrak{l}_2(\K)$
is the special linear Lie algebra with the standard basis
\[e=\begin{pmatrix}
0 & 1\\
0 & 0
\end{pmatrix},
f=\begin{pmatrix}
0 & 0\\
1 & 0
\end{pmatrix}, h=\begin{pmatrix}
1 & 0\\
0 & -1
\end{pmatrix}.\]

Consider the module $\sl_2{(\K)}\ot\sl_2{(\K)}$ over the Lie algebra $\sl_2{(\K)}$. It is easy to show the
following decomposition as Lie modules:
\[\mathfrak{s}\mathfrak{l}_2(\K)\ot \mathfrak{s}\mathfrak{l}_2(\K)=
V_5\oplus V_3\oplus V_1,\]
where $V_5, V_3$ and $V_1$ are simple modules of dimension $5$, $3$ and $1$, respectively.
More precisely, $V_5$ has a $\K$-basis \[\{e\ot e, h\ot e+e\ot h, e\ot f-h\ot h+f\ot e, f\ot f, h\ot f+f\ot h\},\]
$V_3$ has a $\K$-basis \[\{ h\ot e-e\ot h, e\ot f-f\ot e, h\ot f-f\ot h\},\] and $V_1$ has a $\K$-basis $\{2e\ot f+h\ot h+ 2f\ot e\}$, where
all basis elements are given by weight vectors.

By some direct computation, we have the following well-known fact which will be useful in our later calculation.

\begin{lem} \label{fact-sl}
$\Hom_{\U(\sl_2(\K))}(\sl_2(\K)\ot \sl_2(\K),A)\cong \K^2$. More precisely, for any $\vp\in \Hom_{\U(\sl_2(\K))}(\sl_2(\K)\ot \sl_2(\K), A)$,
 there exist unique $\la, \mu\in \K$, such that
$\vp|_{V_5}=0$,
$\vp(h\ot e-e\ot h)=\la e$, $\vp(e\ot f-f\ot e)=\dfrac{\la}{2}h$, $\vp(f\ot h-h\ot f)=\la f$,
and
$\vp(2e\ot f+h\ot h+ 2f\ot e)=\mu 1_A$.
\end{lem}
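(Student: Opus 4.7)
The plan is to reduce the computation to a dimension count via Schur's lemma, then pin down the explicit constants by weight-vector bookkeeping. First I would observe that, as an $\sl_2(\K)$-module under the adjoint action coming from the Lie bracket of $A$, the matrix algebra $A=\mathbb{M}_2(\K)$ decomposes as $\K\cdot 1_A\oplus \sl_2(\K)\cong V_1\oplus V_3$, where $V_k$ denotes the unique simple $\sl_2(\K)$-module of dimension $k$. Combining this with the given decomposition $\sl_2(\K)\ot\sl_2(\K)=V_5\oplus V_3\oplus V_1$ and Weyl's complete-reducibility theorem, Schur's lemma would immediately give
$$\Hom_{\U(\sl_2(\K))}(\sl_2(\K)\ot\sl_2(\K),A)\cong \Hom(V_3,V_3)\oplus \Hom(V_1,V_1)\cong \K^2,$$
and in particular force $\vp|_{V_5}=0$ for every equivariant $\vp$.

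Next I would isolate the two scalar parameters. For the $V_1$-component: since $V_1$ is spanned by $2e\ot f+h\ot h+2f\ot e$ and the trivial summand of $A$ is $\K\cdot 1_A$, equivariance yields a unique $\mu\in\K$ with $\vp(2e\ot f+h\ot h+2f\ot e)=\mu\cdot 1_A$. For the $V_3$-component: a direct check that $e\cdot(h\ot e-e\ot h)=0$ identifies $h\ot e-e\ot h$ as a highest weight vector of weight $2$ in the summand $V_3\subset \sl_2(\K)\ot\sl_2(\K)$, and since the weight-$2$ subspace of $A$ is one-dimensional, spanned by $e$, one obtains a unique $\la\in\K$ with $\vp(h\ot e-e\ot h)=\la e$.

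The remaining two formulas would then be forced by successively applying the lowering operator $f$. The two routine computations to record are $f\cdot(h\ot e-e\ot h)=-2(e\ot f-f\ot e)$ in $\sl_2(\K)\ot\sl_2(\K)$ against $f\cdot e=-h$ in $A$, yielding $\vp(e\ot f-f\ot e)=\la h/2$; and $f\cdot(e\ot f-f\ot e)=f\ot h-h\ot f$ against $f\cdot h=2f$, yielding $\vp(f\ot h-h\ot f)=\la f$. The main obstacle, such as it is, is simply keeping track of the diagonal $\sl_2$-action on tensor products carefully enough to obtain the precise normalizations (notably the factor $1/2$); existence and uniqueness of the pair $(\la,\mu)$ is already settled abstractly by Schur's lemma.
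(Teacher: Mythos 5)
Your proof is correct, and it follows the route the paper itself sets up (the paper states the lemma as a ``well-known fact'' obtained ``by some direct computation'' immediately after recording the decomposition $\sl_2(\K)\ot\sl_2(\K)=V_5\oplus V_3\oplus V_1$, which is exactly the Schur's-lemma-plus-weight-vector argument you give). Your explicit lowering-operator computations reproduce the constants in the paper's Table 1, so there is nothing to add.
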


In fact, the image of $\vp$ on the basis of $\sl_2(\K)\ot \sl_2(\K)$ is shown in the following table.

\begin{center}
{\footnotesize Table 1. The image of $\vp$ on the basis of $\sl_2(\K)\ot \sl_2(\K)$}
\begin{tabular}{|c|c|c|c|}
\hline
$\vp(-\ot-)$ & $e$ & $f$ & $h$\\ \hline
   $e$ &  0 & $\frac{\mu}{6}1_A+\frac{\la}{4}h$ &   $-\frac{\la}{2}e$\\ \hline
   $f$ & $\frac{\mu}{6}1_A-\frac{\la}{4}h$ & 0 & $\frac{\la}{2}f$ \\ \hline
   $h$  & $\frac{\la}{2}e$ & $-\frac{\la}{2}f$ & $\frac{\mu}{3} 1_A$\\ \hline
   \end{tabular}
   \end{center}

To compute the FGV-Poisson cohomology of $A$, we also need  the following facts.

\begin{lem}\label{facts-M2} Keep the above notation. Then

\begin{enumerate}
\item[(1)] $\Ext_{\U(A)}^2(\K,A)=0$,

\item[(2)] $\Ext_{\U(A)}^1(\K,A)$ $\cong \K$,

\item[(3)] $\Hom_\Q(\Omega^2(A),A)\cong \K^2$.
\end{enumerate}
\end{lem}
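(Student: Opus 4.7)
My plan is to handle (1) and (2) together via a direct Lie algebra cohomology computation, and (3) via the spectral sequence of Proposition~6.6 combined with some $\sl_2$-representation theory.

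For (1) and (2), the key observation is that $A=\K\cdot 1_A\oplus \sl_2(\K)$ is a direct product of Lie algebras (since $1_A$ is Lie-central and $\sl_2(\K)$ is an ideal), and $A$ decomposes the same way as a Lie $A$-module (trivial on the line, adjoint on $\sl_2$). Applying the Künneth formula for Lie algebra cohomology of a direct product, I get
\[\Ext^n_{\U(A)}(\K,A)=H^n(A,A)\cong \bigoplus_{p+q=n}H^p(\K,\K)\otimes \bigl(H^q(\sl_2,\K)\oplus H^q(\sl_2,\sl_2)\bigr).\]
Since $\sl_2(\K)$ is semisimple of char $0$, the Casimir argument yields $H^*(\sl_2,V)\cong H^*(\sl_2,\K)\otimes V^{\sl_2}$ for any finite-dimensional module $V$; in particular $H^*(\sl_2,\sl_2)=0$ because $\sl_2$ has trivial center. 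Plugging in $H^*(\K,\K)=\K[0]\oplus\K[1]$ (from the two-term Chevalley-Eilenberg complex of an abelian line) and the standard $H^*(\sl_2,\K)=\K[0]\oplus\K[3]$, degreewise addition gives $H^1(A,A)=\K$ and $H^2(A,A)=0$, proving (2) and (1).

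For (3), I will first use the spectral sequence $\Ext^q_{\U(A)}(\K,\Ext^p_{A^e}(M,N))\Rightarrow \Ext^{p+q}_\Q(M,N)$ with $M=\Omega^2(A)$ and $N=A$. By dimension shifting in the bar resolution, $\Ext^p_{A^e}(\Omega^2(A),A)\cong \HH^{p+2}(A,A)$ for $p\ge 1$, and these all vanish because $A=\mathbb{M}_2(\K)$ is separable. Hence the spectral sequence collapses onto the column $p=0$ and
\[\Hom_\Q(\Omega^2(A),A)\cong \Hom_{A^e}(\Omega^2(A),A)^{\U(A)}.\]
The next step is to identify $\Hom_{A^e}(\Omega^2(A),A)$ with the space $Z^2(A,A)$ of Hochschild $2$-cocycles: the adjunction $\Hom_{A^e}(A^4,A)\cong \Hom(A^2,A)$ combined with the defining relations of $\Omega^2(A)$ translates the condition of factoring through $\Omega^2(A)$ into the Hochschild cocycle condition. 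Since $\HH^2(A,A)=0$ and $\HH^1(A,A)=0$, every $2$-cocycle is a coboundary and every derivation is inner, giving a $\U(A)$-equivariant isomorphism $Z^2(A,A)\cong \Hom(A,A)/(A/Z(A))$.

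Finally I will do the $\sl_2$-representation-theoretic bookkeeping. The trace pairing is $\U(A)$-equivariant and nondegenerate, so $A\cong A^*$ and $\Hom(A,A)\cong A\otimes A$ as $\sl_2$-modules. Using $A=V_1\oplus V_3$ (with $V_1=\K\cdot 1_A$ and $V_3=\sl_2(\K)$) together with the Clebsch-Gordan decomposition $V_3\otimes V_3=V_5\oplus V_3\oplus V_1$ recalled before Lemma~7.1, I obtain $\Hom(A,A)\cong 2V_1\oplus 3V_3\oplus V_5$. The inner-derivation map $a\mapsto \mathrm{ad}_a$ is $\U(A)$-equivariant with kernel $Z(A)$, and since $\mathrm{ad}_a$ annihilates $1_A$ and lands in $\sl_2$ it factors through $\Hom(\sl_2,\sl_2)\cong V_5\oplus V_3\oplus V_1\subset \Hom(A,A)$; as the $V_3$-isotypic component there is one-dimensional, the image is forced to be precisely that unique copy of $V_3$. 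Hence the quotient is $2V_1\oplus 2V_3\oplus V_5$, and taking $\U(A)$-invariants (equivalently $\sl_2$-invariants, since $\K\cdot 1_A$ acts trivially) yields $2V_1=\K^2$, proving (3). The main delicate point is pinning down which of the three $V_3$-summands in $\Hom(A,A)$ comes from inner derivations, which is resolved exactly by this support argument.
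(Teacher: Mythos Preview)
Your proof is correct. For parts (1) and (2) the paper simply omits the calculation, and your K\"unneth/Whitehead argument is a clean way to carry it out.

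For part (3), both you and the paper arrive at the identification $\Hom_\Q(\Omega^2(A),A)\cong Z^2(A,A)^{\U(A)}$, i.e.\ the space of $\U(A)$-equivariant Hochschild $2$-cocycles; you reach it via the spectral sequence and separability of $\mathbb{M}_2(\K)$, while the paper reads it off directly from the presentation $\Omega^2(A)=A^4/I$. The genuine divergence is in computing this space. The paper parametrizes $\Hom_{\U(A)}(A^2,A)$ explicitly using Lemma~\ref{fact-sl} (three parameters $\nu,\la,\mu$) and then imposes the cocycle condition $(\ast)$ to obtain the single relation $\mu=3\la$, leaving a two-parameter family. You instead exploit $\HH^1(A,A)=\HH^2(A,A)=0$ to identify $Z^2(A,A)\cong \Hom(A,A)/(A/Z(A))$ as $\sl_2$-modules, and then count invariants by Clebsch--Gordan. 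Your route is more structural and avoids checking $(\ast)$ by hand; the paper's route, however, produces the explicit cocycles $\Phi_{\nu,\la}$ of Table~2, which are then used to write down the basis $\overline{(\Phi_{0,2},0)}$ of $\HP^2(A)$ and the explicit formal deformation in Table~3. So your argument establishes the lemma efficiently, but you would need to supplement it with an explicit description of the two invariant cocycles if you want to continue to the remaining results of the section.
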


\begin{proof} The proofs of (1) and (2) are just some routine calculations and omitted here.
We only prove part (3), which needs some technical argument.

By definition, $\Omega^2(A)=A^4/I$, where $I$ is
the submodule of the $\Q$-module $A^4$ generated by
\[\{a\ot b\ot c\ot 1_A-1\ot ab\ot c\ot 1_A+1\ot a\ot bc\ot 1_A-
1\ot a\ot b \ot c\}.\]
Therefore, we know that
\[\Hom_\Q(\Omega^2(A),A)
\cong \{f \in \Hom_{\U(A)}(A^2,A)\mid f\ \mbox{{\rm satisfies\  $(\ast)$}} \},\]
where $(\ast)$ means the equation
\[af(b,c)-f(ab,c)+f(a,bc)-f(a,b)c=0,\eqno{(\ast)}\]
or equivalently, $f$ is a 2-cocycle in the Hochschild complex.

Suppose that $\vp$ is a $\K$-bilinear map satisfying $(\ast)$ in $\Hom_{\U(A)}(A^2,A)$. Then we have $\vp(1_A, 1_A) \in Z(A)$ and
hence $\vp(1_A, 1_A) =\nu 1_A$ for some $\nu \in \K$,
the reason is that $\vp$ is a Lie module homomorphism, that is
\[\{a, \vp(1_A\ot 1_A)\}=\vp(\{a,1_A\ot 1_A\})=0\]
holds for all $a\in A$. By applying $(\ast)$, we therefore obtain that for all $x\in A$,
\[\vp(x\ot 1_A)=\vp(1_A\ot x)=\nu x.\]

On the other hand, each Lie module can be viewed as a module over $\mathfrak{s}\mathfrak{l}_2(\K)$
and \[\Hom_{\U(A)}(A^2,A)=\Hom_{\U(\mathfrak{s}\mathfrak{l}_2(\K))}(A^2,A).\]
Now $\vp$ is determined by $\nu$ and its restriction to $\sl_2(\K)\ot \sl_2(\K)$, and the latter one
is uniquely given by some $\lambda$ and $\mu$ as shown in Lemma \ref{fact-sl}.
By Table 1 and the equation $(\ast)$, we show that
$\mu=3\la$. Therefore $\vp$ is determined by $\nu$ and $\lambda$ as shown in the following table.
\begin{center}
{\footnotesize Table 2. The image of $\vp$ on the $\K$-basis of $A\ot A$}
\begin{tabular}{|c|c|c|c|c|}
\hline
$\vp(-\ot-)$& $1_A$      & $e$                      & $f$                     & $h$\\ \hline
   $1_A$       &  $\nu 1_A$  &$\nu e$                  & $\nu f$                 & $\nu h$ \\ \hline
   $e$         &  $\nu e$   & $0$   & $\dfrac{\la}{2} 1_A+\dfrac{\la}{4}h$ & $-\dfrac{\la}{2} e$\\ \hline
   $f$         &  $\nu f$   & $\dfrac{\la}{2} 1_A-\dfrac{\la}{4}h$  & $0$   & $\dfrac{\la}{2} f$ \\ \hline
   $h$         &  $\nu h$ & $\dfrac{\la}{2} e$                  & $-\dfrac{\la}{2} f$                & $\la$ $1_A$ \\ \hline
   \end{tabular}
   \end{center}
Conversely, any $\nu, \lambda\in \K$ uniquely give to an element $\Phi_{\nu,\la}$ in $\Hom_{
\U(A)}(A^2, A)$ which satisfies $(\ast)$, and hence an element in $\Hom_\Q(\Omega^2(A),A)$. The proof is completed.
\end{proof}

As a vector space, $\mathbb{M}_2(\K)$ is of dimension 4, and hence there are many choices of associative multiplication on it, among which
are two extreme cases. One is given by the matrix product, and the other
one is trivial
An interesting observation is that
these two products are essentially the only cases to make the general linear Lie algebra $\mathbb{M}_2(\K)$ a Poisson algebra.

\begin{cor}
Let $(\mathbb{M}_2(\K), \circ, [-,-])$ be a Poisson algebra,
where $(\mathbb{M}_2(\K), [-,-])$ is the general linear Lie algebra.
Then as an associative algebra, $(\mathbb{M}_2(\K), \circ)$ is either isomorphic to
the matrix algebra, or to the trivial one.
\end{cor}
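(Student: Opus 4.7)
The plan is to classify all associative products $\circ$ on $A = \mathbb{M}_2(\K)$ compatible with the fixed $\gl_2(\K)$-Lie bracket, and then show that up to isomorphism only the two claimed ones occur.

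First, I would recast the Leibniz rule $[a \circ b, c] = a \circ [b,c] + [a,c] \circ b$ as the assertion that $\circ\colon A \otimes A \to A$ is a morphism of $\U(A)$-modules under the diagonal adjoint action. Using the $\U(A)$-module decomposition $A = \K \cdot 1_A \oplus \sl_2(\K)$ (with $1_A$ central) together with Lemma 7.2 on the $\sl_2(\K) \otimes \sl_2(\K)$ summand, any such $\circ$ is parameterized by five scalars: $\circ(1_A, 1_A) = \nu 1_A$, $\circ(1_A, x) = \alpha x$, $\circ(x, 1_A) = \beta x$ for $x \in \sl_2(\K)$, and on $\sl_2(\K) \otimes \sl_2(\K)$ the clean formula $\circ(x, y) = (\mu/6) K(x, y) \cdot 1_A + (\la/4)[x, y]$, where $K(x, y) = \mathrm{tr}(xy)$ is the trace form on $\sl_2(\K)$. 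A direct comparison with Table 1 confirms that this matches the $(\la, \mu)$-parametrization of Lemma 7.2.

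Next, the existence of a unit $e_\circ$ for $\circ$ together with the Leibniz rule forces $[e_\circ, a] = 0$ for every $a$ (specialise $a = e_\circ$ in Leibniz and use that $e_\circ$ acts as identity), so $e_\circ$ lies in the Lie-centre $\K \cdot 1_A$; writing $e_\circ = \xi \cdot 1_A$ with $\xi \in \K^{\ast}$, the identity axiom yields $\nu = \alpha = \beta = 1/\xi$. Imposing associativity then separates by isotypic components of $\sl_2(\K)^{\otimes 3}$: the $V_1$-part of the associator is proportional to $K([x,y], z) - K(x, [y,z])$ and vanishes by invariance of $K$; the $V_3$-part, by the $\sl_2(\K)$-identity $[y, [z, x]] = -2\bigl(K(x, y) z - K(y, z) x\bigr)$, is a scalar multiple of $\mu/(6\xi) - \la^2/8$; and triples involving $1_A$ are automatic because $1_A$ acts on $A$ as $(1/\xi)\cdot\mathrm{id}$. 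Hence the only associativity constraint is $4\mu = 3\xi\la^2$.

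The corollary now follows from a case split. If $\la = 0$ then $\mu = 0$, so $\circ$ vanishes on $\sl_2(\K) \otimes \sl_2(\K)$ and $A$ becomes $\K \cdot e_\circ \oplus \sl_2(\K)$ with $\sl_2(\K)$ a square-zero ideal; after rescaling $e_\circ \mapsto 1$ this is precisely the trivial algebra. If $\la \neq 0$, I would define an isomorphism $\psi\colon (A, \circ) \to (\mathbb{M}_2(\K), \cdot)$ by $\psi(1_A) = (1/\xi) 1_A$, $\psi(h) = (\la/2) h$, $\psi(e) = p e$, $\psi(f) = q f$ for any $p, q \in \K^{\ast}$ with $pq = \la^2/4$; the relation $4\mu = 3\xi\la^2$ is precisely what makes $\psi$ multiplicative on each basis pair, proving $(A, \circ) \cong \mathbb{M}_2(\K)$ as associative algebras. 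The main obstacle is the associativity step: recognising that the four a priori independent scalar equations collapse to the single relation $4\mu = 3\xi\la^2$ requires the trace-form repackaging together with the two $\sl_2(\K)$-identities above; without these structural inputs one is left with a tedious hand check over many triples of basis vectors.
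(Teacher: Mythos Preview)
Your proof is correct and follows essentially the same route as the paper: both parameterize $\circ$ as a $\U(A)$-module map via the $\sl_2$-decomposition of Lemma~\ref{fact-sl}, then impose associativity to reduce everything to a single relation between $\la$ and $\mu$, and finally split on $\la=0$ versus $\la\neq 0$. You are in fact more careful than the paper, which tacitly normalizes the unit of $\circ$ to be $1_A$ (hence writes $4\mu=3\la^2$ rather than your $4\mu=3\xi\la^2$) and does not spell out the isomorphism to $\mathbb{M}_2(\K)$ in the $\la\neq 0$ case; your trace-form repackaging and the $\sl_2$-identity are a clean way to see why associativity collapses to one scalar equation, whereas the paper simply refers back to the table computations in Lemma~\ref{facts-M2}.
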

\begin{proof} For simplicity, we set $P=\mathbb{M}_2(\K)$ and $m(a,b)=a\circ b$ for $a, b\in P$. The Leibniz rule implies that
 $m$ is a homomorphism of Lie modules from $P^2$ to $P$.
From the proof of (4) in Lemma \ref{facts-M2} and Table 1, we know
$4\mu=3\la^2$ since $m(a,m(b,c))=m(m(a,b),c)$. When $\la=0$, the associative algebra $(\mathbb{M}_2(\K), \circ)$
is the trivial associative algebra, and when $\la\neq 0$, it is isomorphic to
the matrix algebra.
\end{proof}

By applying the long exact sequence in Theorem \ref{l.e.s. of Poisson modules} and Lemma \ref{facts-M2}, the Poisson
cohomology groups of $A$ of lower degrees are calculated as follows.

\begin{prop} Keep the above notation. Then
\begin{enumerate}
\item[(1)]  $\HP^0(A)\cong \K$;
\item[(2)] $\HP^1(A)=0$;
\item[(3)] $\HP^2(A)\cong \K$.
\end{enumerate}
\end{prop}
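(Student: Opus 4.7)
The plan is to apply the long exact sequence of Theorem \ref{l.e.s. of Poisson modules} with $M = A$, combined with the data computed in Lemma \ref{facts-M2}. Plugging in $\Ext_{\U(A)}^1(\K, A) \cong \K$, $\Ext_{\U(A)}^2(\K, A) = 0$, and $\Hom_\Q(\Omega^2(A), A) \cong \K^2$, the sequence \eqref{les.pcoh} reduces in low degrees to
\[
0 \to \HP^0(A) \to \Hom_{\U(A)}(\K, A) \to 0 \to \HP^1(A) \to \K \xrightarrow{\partial} \K^2 \to \HP^2(A) \to 0.
\]
Thus it remains to identify $\Hom_{\U(A)}(\K, A)$ and to determine the rank of the connecting map $\partial$.

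For the first piece, $\Hom_{\U(A)}(\K, A)$ is the subspace $A^{\U(A)}$ of Lie invariants, i.e.\ the center of the Lie algebra $\gl_2(\K) = \K \cdot 1_A \oplus \sl_2(\K)$. Since $\sl_2(\K)$ has trivial center, this space equals $\K \cdot 1_A$, which already yields $\HP^0(A) \cong \K$.

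For the second piece I would first unwind the connecting map $\partial$ from the short exact sequence $0 \to QC^\bullet(\Omega^2(A), A)[-2] \to PC^\bullet(A, A) \to CE^\bullet \to 0$. Since $PC^1(A, A) = \Hom(A, A) = CE^1$, the standard zig-zag shows that $\partial$ sends the class of a Lie derivation $\delta\colon A \to A$ to the Hochschild $2$-cocycle
\[
c_\delta(a, b) = a\delta(b) - \delta(ab) + \delta(a) b,
\]
viewed as an element of $\Hom_\Q(\Omega^2(A), A)$; its $\U(A)$-equivariance is a direct consequence of $\delta$ being a Lie derivation. Since $\Ext_{\U(A)}^1(\K, A) \cong \K$, it suffices to produce a single Lie derivation $\delta$ representing a nonzero class and satisfying $c_\delta \neq 0$.

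I would take $\delta$ defined by $\delta(1_A) = 1_A$ and $\delta|_{\sl_2(\K)} = 0$. A short case check using the Lie-algebra decomposition $\gl_2(\K) = \K\cdot 1_A \oplus \sl_2(\K)$ shows $\delta$ is a Lie derivation, and it is not inner because every inner derivation $\{x, -\}$ annihilates $1_A$. Evaluating $c_\delta$ on basis elements gives $c_\delta(1_A, 1_A) = 1_A$ and $c_\delta(h, h) = -\delta(h^2) = -1_A$; in the parametrization from the proof of Lemma \ref{facts-M2}(3) this reads $(\nu, \lambda) = (1, -1) \neq 0$. Consequently $\partial$ is injective, forcing $\HP^1(A) = \ker \partial = 0$ and $\HP^2(A) \cong \K^2 / \mathrm{im}(\partial) \cong \K$. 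The main obstacle is pinning down the explicit form of $\partial$; once this description is in hand, the rest reduces to two basis evaluations.
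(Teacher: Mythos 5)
Your proposal is correct and follows the same route as the paper: the paper's own ``proof'' is the single sentence that the result follows from the long exact sequence of Theorem \ref{l.e.s. of Poisson modules} together with Lemma \ref{facts-M2}. What you add is precisely the step the paper leaves implicit, namely that the connecting map $\partial\colon \Ext^1_{\U(A)}(\K,A)\to\Hom_\Q(\Omega^2(A),A)$ is injective; without this the sequence only yields the alternative $(\HP^1(A),\HP^2(A))=(0,\K)$ or $(\K,\K^2)$, so your zig-zag identification of $\partial(\overline{\delta})$ with the Hochschild coboundary $c_\delta(a,b)=a\delta(b)-\delta(ab)+\delta(a)b$, and the evaluation $c_\delta(1_A,1_A)=1_A\neq 0$, is exactly the missing content. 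Your generator $\delta$ (with $\delta(1_A)=1_A$ and $\delta|_{\sl_2(\K)}=0$) is indeed a non-inner Lie $1$-cocycle, hence spans $\Ext^1_{\U(A)}(\K,A)\cong\K$, and both evaluations $c_\delta(1_A,1_A)=1_A$ and $c_\delta(h,h)=-\delta(h^2)=-1_A$ are correct. One caveat: the reading ``$(\nu,\la)=(1,-1)$'' relies on Table 2, whose $\sl_2(\K)\ot\sl_2(\K)$ block appears to be mis-normalized --- imposing $(\ast)$ on the triple $(e,f,h)$ forces $\mu=3(\la-\nu)$ rather than $\mu=3\la$, and consistently your $c_\delta$ satisfies $c_\delta(e\ot f)=-\tfrac{1}{2}1_A$ with no $h$-component, i.e.\ it corresponds to $(\nu,\la,\mu)=(1,0,-3)$ in the parametrization of Lemma \ref{fact-sl}, which is incompatible with Table 2 as printed but compatible with the corrected constraint. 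This discrepancy does not affect your argument: all you need is $c_\delta\neq 0$ in the two-dimensional space $\Hom_\Q(\Omega^2(A),A)$, and $c_\delta(1_A,1_A)=1_A$ already gives that, whence $\HP^1(A)=\Ker\partial=0$ and $\HP^2(A)\cong\K^2/\Im\partial\cong\K$.
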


More precisely, $\overline{(\Phi_{0,2},0)}$ gives a basis of $\HP^2(A)$.
In fact, by construction as in Table 2, $\Phi_{0,2}$ is a Lie module homomorphism and satisfies the condition $(\ast)$, which implies that
$(\Phi_{0,2},0)\in \Hom(A^2\oplus \wedge^2, A)$ is a Poisson 2-cocycle of $A$, and its
corresponding cohomology class $\overline{(\Phi_{0,2},0)}$ gives an element in $\HP^2(A)$.

For given $s\in\K$, we may define a $\K[[t]]$-Poisson algebra structure on $A[[t]]$ by setting $l_i=0$ for all $i\ge 0$,
 and $m_t$ to be given as in the following table.
\begin{center}
{\footnotesize Table 3. The image of $m_t$ on the basis of $A\ot A$}
\begin{tabular}{|c|c|c|c|c|}
\hline
$m_t(-,-)$ & $1_A$ & $e$ & $f$ & $h$\\ \hline
   $1_A$   & $1_A$ & $e$ & $f$ & $h$ \\ \hline
   $e$     & $e$ &  0 & $\frac{1}{2}(1-t)h+\frac{1}{2}(1-ts)^21_A$ &   $-e-tse$\\ \hline
   $f$ & $f$ & $-\frac{1}{2}(1-t)h+\frac{1}{2}(1-ts)^21_A$ & 0 & $f+tsf$ \\ \hline
   $h$ & $h$ & $e-tse$ & $-f-tsf$ & $(1-ts)^2$\\ \hline
   \end{tabular}
   \end{center}
It is direct to show that $(A[[t]], m_t, l_t)$ is a formal Poisson deformation
 of $A$ ``lifting" the Poisson $2$-cocycle $s(\Phi_{0,2},0) = (\Phi_{0,2s},0)$, say $(m_1,l_1)= (\Phi_{0,2s},0)$.
 Moreover, for any Poisson 2-cocycle $\eta$, there exists some $s\in\K$ such that
 $\bar\eta = \overline{(\Phi_{0,2s},0)}$ in $\HP^2(A)$, a standard argument shows the
existence of a formal deformation $(A[[t]], m_t', l_t')$ lifting $\eta$ which is equivalent to $(A[[t]], m_t, l_t)$.

\begin{rmk} It is worth mentioning that for the given
Poisson 2-cocyle $(\Phi_{0,2s},0)$, we only give one formal deformation.
It is not known yet whether all formal deformations
lifting it are equivalent.

By the long exact sequence $(\ref{les.pcoh})$ in Theorem \ref{l.e.s. of Poisson modules} and some direct calculations,
we obtain that $\HP^3(A)\neq 0$. Therefore, the above example also tells that
it is still possible to have formal deformations even though ``obstructions" exist.
\end{rmk}

\noindent{\textbf{Acknowledgement}}  The work is supported by the Natural Science Foundation of China (Grant No. 11401001, 11431010 and 11571329).

\vspace{1cm}

{\footnotesize
\noindent Yan-Hong Bao \\
School of Mathematical Sciences, Anhui University, Hefei, China, 230601\\
E-mail address: baoyh@ahu.edu.cn

\vspace{2mm}
\noindent Yu Ye\\
School of Mathematical Sciences, University of Sciences and Technology of China,
Hefei, 230026, China \\
Wu Wen-Tsun Key Laboratory of Mathematics, USTC, Chinese Academy of Sciences,
Hefei, 230026, Anhui, China\\
E-mail address: yeyu@ustc.edu.cn
}

\end{document}